\title[Behavior of corank one singular points]{%
  Behavior of corank one singular points \\
  on wave fronts}
\date{May 18, 2007}
\theoremstyle{plain}
 \newtheorem{theorem}{Theorem}[section]
 \newtheorem{maintheorem}{Theorem}
 \newtheorem*{theorem*}{Theorem}
 \newtheorem*{lemma*}{Lemma}
 \newtheorem{proposition}[theorem]{Proposition}
 \newtheorem{fact}[theorem]{Fact}
 \newtheorem{fact*}{Fact}
 \newtheorem{lemma}[theorem]{Lemma}
 \newtheorem{corollary}[theorem]{Corollary}
\theoremstyle{remark}
 \newtheorem{definition}[theorem]{Definition}
 \newtheorem{remark}[theorem]{Remark}
 \newtheorem*{remark*}{Remark}
 \newtheorem{example}[theorem]{Example}
\numberwithin{equation}{section}
\numberwithin{figure}{section}
\renewcommand{\theenumi}{{\rm(\arabic{enumi})}}
\renewcommand{\labelenumi}{\theenumi}
\newcommand{\pt}[1]{\mathsf{#1}}
\newcommand{\R}{\boldsymbol{R}}
\newcommand{\rank}{\operatorname{rank}}
\renewcommand{\phi}{\varphi}
\newcommand{\sign}{\operatorname{sgn}}
\newcommand{\inner}[2]{\left\langle{#1},{#2}\right\rangle}
\newcommand{\A}{\mathcal{A}}
\newcommand{\E}{\mathcal{E}}
\newcommand{\Sec}{\operatorname{Sec}}
\newcommand{\CE}{\operatorname{C}}
\newcommand{\SW}{\operatorname{SW}}
\newcommand{\CCR}{\operatorname{CCR}}
\newcommand{\DS}{\operatorname{DS}}
\newcommand{\CL}{\operatorname{CL}}
\newcommand{\SB}{\operatorname{SB}}
\author{Kentaro Saji}
\address[Saji]{%
   Department of Mathematics,
   Hokkaido University,
   Sapporo 060-0810,
   Japan
}
\email{saji@math.sci.hokudai.ac.jp}
\author{Masaaki Umehara}
\address[Umehara]{%
   Department of Mathematics, Graduate School of Science,
   Osaka University,
   Toyonaka, Osaka 560-0043,
   Japan
}
\email{umehara@math.wani.osaka-u.ac.jp}
\author{Kotaro Yamada}
\address[Yamada]{%
   Faculty of Mathematics,
   Kyushu University,
   Higashi-ku, Fukuoka 812-8581, Japan%
}
\email{kotaro@math.kyushu-u.ac.jp}
\subjclass[2000]{%
 Primary 57R45;   
 Secondary 53A05. 
}
\keywords{
 Wave fronts, singular curvature, 
 coherent tangent bundles, the Gauss-Bonnet theorem}
\begin{document}
\begin{abstract}
Let $M^2$ be an oriented $2$-manifold and $f\colon{}M^2\to \R^3$ 
a $C^\infty$-map. 
A point $p\in M^2$ is called a {\em singular point\/} 
if $f$ is not an immersion at $p$.
The map $f$ is called a {\em front\/} (or {\em wave front\/}), 
if there exists a unit $C^\infty$-vector field $\nu$ 
such that the image of each tangent vector $df(X)$ $(X\in TM^2)$
is perpendicular to $\nu$, and the pair $(f,\nu)$ gives
an immersion into $\R^3\times S^2$.
In a previous paper, we gave an intrinsic formulation of wave fronts
in $\R^3$.
In this paper, we shall investigate the behavior of cuspidal edges 
near corank one singular points and 
establish Gauss-Bonnet-type formulas under the intrinsic formulation.
\end{abstract}
\maketitle
\section*{Introduction}
Let $M^2$ be a $2$-manifold and $f:M^2\to \R^3$ a $C^\infty$-map.
A point $p\in M^2$ is called {\em regular\/} if $f$ is an immersion on
a sufficiently small neighborhood of $p$, 
and is called {\em singular\/} if it is not regular. 
To extend the concept of surfaces to a larger class that allows
singularities,
we recall the following definitions:
A $C^\infty$-map $f:M^2\to \R^3$ is called a {\em frontal\/}
if there exists a unit vector field $\nu$ along $f$ such that
$\nu$ is perpendicular to $df(TM^2)$.
By  parallel translations,
$\nu$ can be considered as a map into the unit sphere $S^2$,
which is called the {\em Gauss map\/} of the frontal $f$.
Moreover, if the map
\[
   L:=(f,\nu):M^2\longrightarrow \R^3\times S^2
\]
gives an immersion, 
$f$ is called a {\em front\/} or a {\em wave front}.
Using the canonical inner product on $\R^3$,
we identify the unit tangent bundle
$\R^3\times S^2=T_1\R^3$ with the unit cotangent bundle $T_1^*\R^3$, 
which has the canonical contact structure.
When $f$ is a front, $L$ gives a Legendrian immersion with respect to
the canonical contact structure.
Hence, fronts are considered as projections of Legendrian
immersions.

Consider a front $f\colon{}M^2\to \R^3$ defined on a compact oriented
$2$-manifold $M^2$.
When the set $\Sigma$ of singular points of $f$ consists of cuspidal 
edges and swallowtails,
Langevin-Levitt-Rosenberg \cite{LLR} and Kossowski \cite{K1}
proved the following two Gauss-Bonnet-type formulas
\begin{alignat*}{2}
 2\deg(\nu)&=
        \chi(M_+)-\chi(M_-)
         +\#S_+-\#S_- \qquad &&(\text{\cite{LLR},\cite{K1}}),
 \tag{1}\label{eq:GB-signed}\\
 2\pi\chi(M^2)
         &=\int_{M^2}K\,dA+2\int_{\Sigma}\!\kappa_s\, d\tau
               \qquad &&(\text{\cite{K1}}),
 \tag{2}\label{eq:GB-unsigned}
\end{alignat*}
where $\deg(\nu)$ is the degree of the Gauss map,
$\#S_+$, $\#S_-$ are the numbers of positive and negative swallowtails
respectively,
$M_+$ (resp.\ $M_-$) is the open submanifold of $M^2$
to which the co-orientation is compatible (resp.\ not compatible)
with respect to the orientation,
and $dA$ (resp.\ $d\tau$) is the area element of the surface 
(resp.\ the arclength measure of the singular set).
(See Section~\ref{sec:intrinsic}, or \cite{SUY} for precise definitions.)
The function $\kappa_s$ is called the {\em singular curvature function},
which is originally defined in \cite{SUY}.
In the proofs of these formulas in \cite{LLR} and \cite{K1}, 
the singular curvature implicitly appeared as 
a measure  $\kappa_s\,d\tau$.
(The formula \eqref{eq:GB-signed} is stated in \cite{LLR}, and 
  proofs for both \eqref{eq:GB-signed} and \eqref{eq:GB-unsigned}
  are in \cite{K1}.)

In \cite{SUY}, the authors 
stated a generalization of
\eqref{eq:GB-signed} and \eqref{eq:GB-unsigned} for 
singularities containing double swallowtails,
and gave a sketch of their proofs.

On the other hand, the classical Gauss-Bonnet formula is intrinsic in nature.
So it is quite natural to formulate the singularities 
of fronts intrinsically.
In this paper, we will give a general
setting of intrinsic fronts according to 
the final section of \cite{SUY},
and will prove the intrinsic Gauss-Bonnet formulas 
(Theorem~\ref{thm:B} in Section~\ref{sec:class}).
As a consequence, 
our intrinsic approach also gives a detailed explanation 
of the proofs of (2.2), (2.6) and Theorem 2.3 in \cite{SUY} 
(see Theorem~\ref{thm:A} and Theorem~\ref{thm:B} in Section~\ref{sec:class}).
\section{An intrinsic approach and the singular curvature function}
\label{sec:intrinsic}

In this section, we give a general setting for intrinsic wave
fronts.
\begin{definition}\label{def:coherent}
 Let $M^2$ be an oriented $2$-manifold.
 An orientable vector bundle $\E$ of rank $2$
 with a metric $\inner{~}{~}$ and a metric connection
 $D$ is called an {\em abstract limiting tangent bundle\/} 
 or a {\em coherent tangent bundle\/}
 if there is a bundle homomorphism
 \[
   \psi\colon{}TM^2\longrightarrow \E
 \]
 such that
 \begin{equation}\label{eq:c}
    D_{X}\psi(Y)-D_{Y}\psi(X)=\psi([X,Y]) \qquad (X,Y\in TM^2).
 \end{equation}
\end{definition}

In \cite{SUY}, the authors used the term
an {\em abstract limiting tangent bundle\/}, but
in this paper we shall rather use 
{\em coherent tangent bundle\/}
instead, since it is a shorter phrase.

In this setting, 
the pull-back of the metric $ds^2:=\psi^*\inner{~}{~}$ is called 
{\em the first fundamental form\/} of $\psi$.
A point $p\in M^2$ is called a {\it singular point\/} 
(of $\psi$) if $\psi_p\colon{}T_pM^2\to\E_p$ is not a bijection,
where $\E_p$ is the fiber of $\E$ at $p$, 
that is,  the first fundamental form is not positive definite.
We denote by $\Sigma$ the set of singular points on $M^2$.

Since $\E$ is orientable, there exists a smooth non-vanishing 
skew-symmetric bilinear section 
$\mu\in \Sec(\E^*\wedge\E^*)$
such that $\mu(e_1,e_2)=\pm 1$ for any orthonormal frame $\{e_1,e_2\}$
on $\E$.
The form $\mu$ is determined uniquely up to $\pm$-ambiguity.
A {\em co-orientation\/} of the coherent tangent bundle $\E$ is a 
choice of $\mu$.
A frame $\{e_1,e_2\}$ is called {\em positive\/} 
with respect to the co-orientation $\mu$
if $\mu(e_1,e_2)=+1$.

From now on, we fix a co-orientation $\mu$ on the coherent tangent bundle.
\begin{definition}[Area elements]
 The {\em signed area form\/} $d\hat A$ and the 
 ({\em un-signed}) {\em area form\/} $dA$  are defined 
 on a positively oriented local coordinate system $(U;u,v)$ as
 \begin{multline*}
    d\hat A := \psi^*\mu = \lambda\,du\wedge dv,\quad
     dA     := |\lambda|\,du\wedge dv,\\
  \text{where}\qquad
  \lambda:=\mu\left(
           \psi_u,
           \psi_v
           \right),
         \qquad
         \left(
           \psi_u:=\psi\left(\frac{\partial}{\partial u}\right),
           \psi_v:=\psi\left(\frac{\partial}{\partial v}\right)
         \right).
 \end{multline*}
 We call the function $\lambda$ the {\em signed area density function\/}
 on $U$.
 The set of the singular points on $U$ is expressed as
 \begin{equation}\label{eq:singular}
   \Sigma\cap U:=\{p\in U\,;\,\lambda(p)=0  \}.
 \end{equation}

 Both $d\hat A$  and $dA$ are independent of the
 choice of positively 
 oriented local coordinate system $(u,v)$, and
 give globally defined $2$-forms on $M^2$.
($d\hat A$ is $C^\infty$-differentiable, but $dA$ is only continuous.)
 When $M^2$ has no singular points, the two forms 
 coincide up to sign.
 We set
 \[
   M_+:=\bigl\{p\in M^2\setminus \Sigma \,;\, d\hat A_p=dA_p\bigr\},\qquad
   M_-:=\bigl\{p\in M^2\setminus \Sigma\,;\, d\hat A_p=-dA_p
 \bigr\}.
 \]
The singular set $\Sigma$ coincides with $\partial M_+=\partial M_-$.
\end{definition}
A singular point $p\in\Sigma$ is called {\em non-degenerate\/} if
$d\lambda$ does not vanish at $p$.
On a neighborhood of a non-degenerate singular point,
the singular set consists of a regular curve $\gamma(t)$ on $M^2$, 
called the {\em singular curve}.
The tangential direction of the singular curve is called the 
{\em singular direction}.
If $p$ is a non-degenerate singular point, 
the rank of $\rank\psi_p$ is $1$.
The direction of the kernel of $\psi_p$ is called the 
{\em  null direction}.
Let $\eta(t)$ be the smooth (non-vanishing)
vector field along the singular curve $\gamma(t)$
which gives the null direction.

Here, we give an example:
\begin{example}\label{ex:front}
 Let $M^2$ be an oriented $2$-manifold,
 $f:M^2\to \R^3$ a frontal (see the introduction) and
 $\nu:M^2\to S^2$ its unit normal vector field.
 Then the vector bundle $\E^f$ on  $M^2$
 whose fiber at $p\in M^2$ is given by
 \[
   \E^f_p:=\{X\in T_{f(p)}\R^3\,;\, \mbox{$X$ is perpendicular to $\nu(p)$}\}
 \]
 is called the {\em limiting tangent bundle\/} of  $f$.
 The restriction of the canonical inner product on $\R^3$ 
 gives the metric on $\E^f$, and
 the tangential part of the Levi-Civita connection 
 on $\R^3$ gives the covariant derivative on $\E^f$ satisfying 
 the condition \eqref{eq:c}.
 The bundle homomorphism between $TM^2$ and $\E^f$ is
 given by
 \[
   \psi_p(X):=df_p(X)\in \E^f_p \qquad (X\in T_pM^2,\,\, p\in M^2).
 \]
 We call this $\E^f$ the 
 {\em coherent tangent bundle associated with the frontal $f$}.
 Let $(U;u,v)$ be an arbitrary positively
 oriented local coordinate system of $M^2$. 
 Then the determinant of three vectors
 \[
    \lambda:=\det(f_u,f_v,\nu)
    \qquad\left(
            f_u:=\frac{\partial f}{\partial u},
            f_v:=\frac{\partial f}{\partial v}
           \right)
 \]
 gives the signed area density function on $U$.

 A singular point is called a 
 {\em cuspidal edge\/} or a {\em swallowtail\/} 
 if the corresponding germ of the $C^\infty$-map is 
 $\A$-equivalent to that of the $C^\infty$-map germ
 \begin{equation}\label{eq:cuspidal-swallow}
    f_{\CE}(u,v):=(u^2,u^3,v) \quad\text{or}\quad
    f_{\SW}(u,v):=(3u^4+u^2v,4u^3+2uv,v)
 \end{equation}
 at $(u,v)=(0,0)$, respectively (see Figure~\ref{fig:sw}).
 Here, two $C^{\infty}$-maps 
 $f_i\colon{}U_i\to \R^3$ ($i=1,2$) are {\em $\A$-equivalent\/} 
 (or {\em right-left equivalent}) at the
 points $p_i\in U_i\subset\R^2$ ($i=1,2$) 
 if there exists a local diffeomorphism $\varphi$ of 
 $\R^2$ with $\varphi(p_1)=p_2$ and a local diffeomorphism 
 $\Phi$ of $\R^3$ with $\Phi(f_1(p_1))=f_2(p_2)$
 such that $f_2=\Phi\circ f_1 \circ \varphi^{-1}$.
 It can be easily checked that both $f_{\CE}$ and $f_{\SW}$
 are fronts. 
 These two types of singular points characterize 
 the generic singularities of fronts in $\R^3$.
 The singular curve of $f_{\CE}$ is the $v$-axis and
 the null direction is the $u$-direction.
 The singular curve of $f_{\SW}$ is the parabola $6u^2+v=0$ and
 the null direction is the $u$-direction.
\end{example}

\begin{figure}
 \begin{center}
   \begin{tabular}{c@{\hspace{2cm}}c}
        \includegraphics[height=4.5cm]{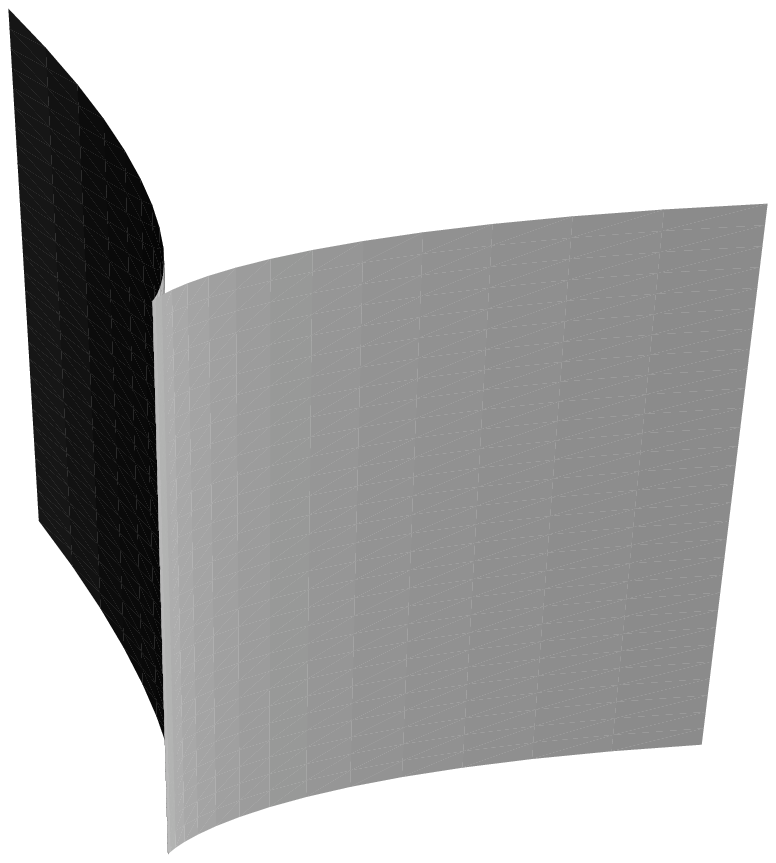} &
        \includegraphics[height=4.5cm]{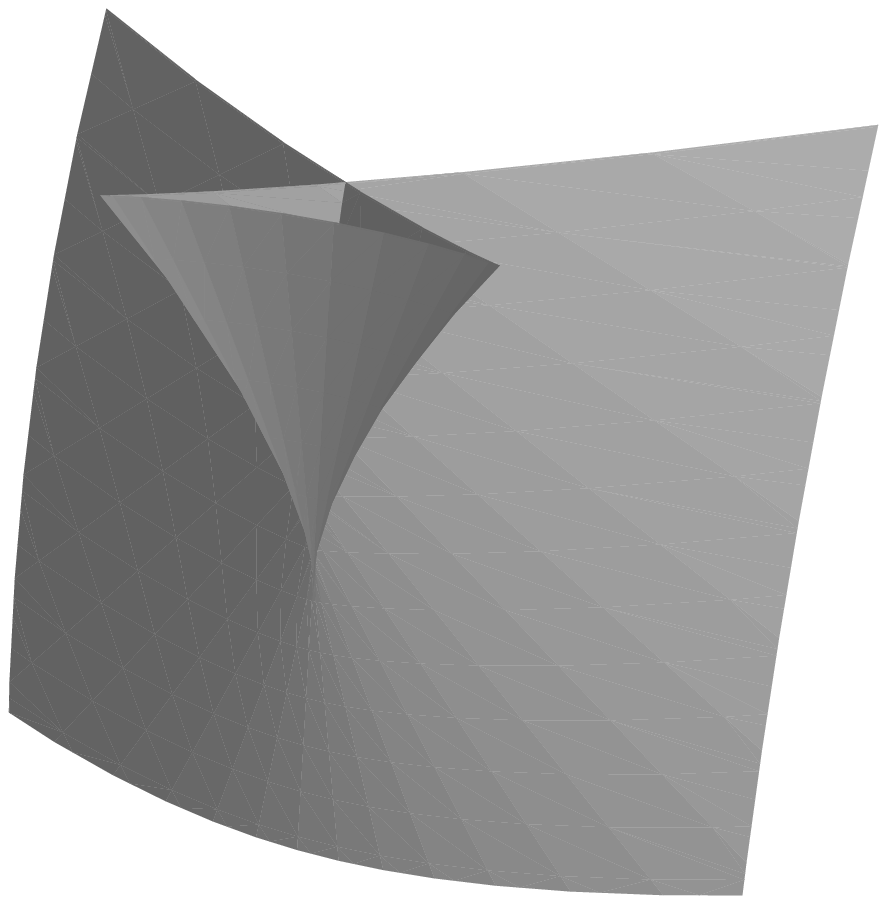}
    \end{tabular}
 \end{center}
\caption{A cuspidal edge and a swallowtail}
\label{fig:sw}
\end{figure}

\begin{definition}\label{def:ak-point}
 Let $(\E,\inner{~}{~},D,\psi)$ be a coherent tangent bundle
 as in Definition~\ref{def:coherent}.
 Take a non-degenerate singular point $p\in M^2$ and 
 let $\gamma(t)$ be the singular curve satisfying  $\gamma(0)=p$.
 Then $p$ is called an {\em $A_2$-point\/} 
 or an {\em intrinsic cuspidal edge\/} if
 the null direction $\eta(0)$ is transversal to
 the singular direction $\dot\gamma(0)=d\gamma/dt|_{t=0}$.
 If $p$ is not an $A_2$-point, but satisfies
 \[
    \left.\frac{d}{dt}\right|_{t=0}
              (\dot\gamma(t)\wedge \eta(t))\ne 0,
 \]
 it is called an {\em $A_3$-point\/} or 
 an {\em intrinsic swallowtail\/}, where $\wedge$ is the
 exterior product on $TM^2$.
\end{definition}

\begin{fact}[\cite{KRSUY}]\label{fact:krsuy}
 Suppose $f:M^2\to \R^3$ is a front.
 Then a non-degenerate singular point $p\in M^2$
 is a cuspidal edge {\rm (}resp.\ swallowtail{\rm )}
 if and only if it is an $A_2$-point {\rm (}resp.\ an $A_3$-point{\rm )}.
\end{fact}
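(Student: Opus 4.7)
The plan is to reduce the statement to the criterion for cuspidal edges and swallowtails established in \cite{KRSUY}, which asserts that a front germ $f$ at a non-degenerate singular point $p$ is $\A$-equivalent to $f_{\CE}$ (resp.\ $f_{\SW}$) if and only if the null direction $\eta(0)$ is transversal to $\dot\gamma(0)$ (resp.\ parallel to $\dot\gamma(0)$, while $(d/dt)|_{t=0}(\dot\gamma(t)\wedge\eta(t))\neq 0$). Since these are exactly the conditions defining $A_2$- and $A_3$-points in Definition~\ref{def:ak-point}, once one checks that the intrinsic and extrinsic notions of singular curve and null direction agree on $\E^f$, the fact follows.

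First I would verify this compatibility. For the coherent tangent bundle $\E^f$ of Example~\ref{ex:front}, the bundle map $\psi$ is literally $df$ regarded as taking values in $\E^f\subset T\R^3$, so $\ker\psi_p=\ker df_p$ and the null direction of $\psi$ coincides with the usual one of $f$. Furthermore, in positively oriented coordinates $(u,v)$ the signed area density is $\lambda=\det(f_u,f_v,\nu)$, the same function that detects singular points and their non-degeneracy in the extrinsic theory; thus the singular curve $\gamma=\{\lambda=0\}$ is the same in both frameworks. Consequently, the $A_2$ and $A_3$ conditions of Definition~\ref{def:ak-point}, applied to $\E^f$, translate verbatim into the hypotheses of the KRSUY criterion.

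As a consistency check for the "only if" direction I would verify the normal forms directly. For $f_{\CE}(u,v)=(u^2,u^3,v)$ the singular set is the $v$-axis with $\dot\gamma(0)=\partial_v$ and $\eta(0)=\partial_u$, which are transversal, so the origin is an $A_2$-point. For $f_{\SW}(u,v)=(3u^4+u^2v,4u^3+2uv,v)$, parametrize the singular parabola by $\gamma(t)=(t,-6t^2)$; then $\dot\gamma(0)=\partial_u$ is parallel to $\eta(0)=\partial_u$, while the coefficient of $\dot\gamma(t)\wedge\eta(t)$ with respect to $\partial_u\wedge\partial_v$ equals $12t$, which has nonzero derivative at $t=0$. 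Hence the origin is an $A_3$-point. Because both being a cuspidal edge or a swallowtail and being an $A_2$- or $A_3$-point are invariant under $\A$-equivalence (source diffeomorphisms move singular curves and null fields compatibly, and target diffeomorphisms pull back $\E^f$ to an isomorphic coherent tangent bundle), the forward direction is settled.

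The main obstacle, and the content of \cite{KRSUY}, is the converse: constructing an explicit $\A$-equivalence from an arbitrary front germ satisfying the intrinsic $A_2$ (resp.\ $A_3$) condition to the model $f_{\CE}$ (resp.\ $f_{\SW}$). In the cuspidal-edge case one picks adapted coordinates in which $\gamma=\{u=0\}$ and $\eta=\partial_u$, expands $f$ in powers of $u$, and uses the Legendrian (front) hypothesis together with $\rank\psi_p=1$ to kill the higher-order terms by a target diffeomorphism, arriving at $(u^2,u^3,v)$. The swallowtail normalization is analogous but more delicate, since $\gamma$ and the null direction are only first-order transversal in $t$; the extra non-degeneracy in Definition~\ref{def:ak-point} is exactly what allows the reduction to $f_{\SW}$. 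I would cite \cite{KRSUY} rather than reproduce this normalization.
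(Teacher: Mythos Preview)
The paper gives no proof of this statement: it is recorded as a Fact with a bare citation to \cite{KRSUY}. Your proposal---checking that for the coherent tangent bundle $\E^f$ the singular curve, signed area density, and null direction agree with their extrinsic counterparts, and then invoking the cuspidal-edge/swallowtail criterion of \cite{KRSUY}---is correct and is exactly the justification the citation is meant to encode.
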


\begin{figure}
\begin{center}
 \includegraphics[width=5cm]{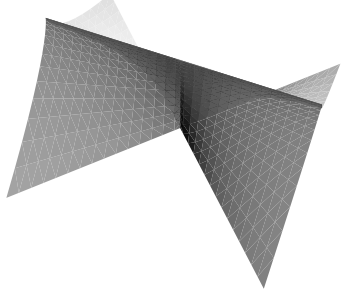}
\end{center}
\caption{A cuspidal crosscap.}\label{fig:ccr}
\end{figure}

\begin{remark}
 A {\em cuspidal cross cap\/} is a singular point which is
 $\A$-equivalent to the $C^\infty$-map germ
 \begin{equation}\label{eq:ccr}
   f_{\CCR}(u,v):= (u,v^2,uv^3),
 \end{equation}
 at $(u,v)=(0,0)$, see Figure~\ref{fig:ccr}.
 The map $f_{\CCR}$ is not a front 
 but a frontal with the unit normal vector field
 \[
   \nu_{\CCR}:=\frac{1}{\sqrt{4+9u^2v^2+4v^6}}
                          (-2v^3,-3uv,2).
 \]
 Though a cuspidal cross cap is different from cuspidal edge, 
 it is also an $A_2$-point in the sense of Definition~\ref{def:ak-point}.
 So Fact~\ref{fact:krsuy} requires the assumption that $f$ is a front.
 (In \cite{FSUY}, a useful criterion for cuspidal cross caps is given.)
\end{remark}

Now we take a coherent tangent bundle $(\E,\inner{~}{~},D,\psi)$
and fix  a singular curve $\gamma(t)$ consisting of $A_2$-points.
Since $d\gamma/dt$ is transversal to the null direction,
the image $\psi(d\gamma/dt)$ does not vanish, and then 
we can take a parameter $\tau$ of $\gamma$ such that 
\[
  \inner{%
      \psi\bigl(\gamma'(\tau)\bigr)}{%
      \psi\bigl(\gamma'(\tau)\bigr)}=1
    \qquad \left('=\frac{d}{d\tau}\right),
\]
which is called the {\em arclength parameter\/} of the
singular curve $\gamma$.
Take a null vector field $\eta(\tau)$ along $\gamma(\tau)$ such that
$\bigl\{\gamma'(\tau),\eta(\tau)\bigr\}$ is a positively oriented frame
field along $\gamma$ for each $\tau$.

Let $n(\tau)$ be a section of $\E$ along $\gamma(\tau)$ such that
$\left\{\psi\bigl(\gamma'(\tau)\bigr),n(\tau)\right\}$ 
is a positive orthonormal frame,
which is called the {\em $\E$-conormal\/} of $\gamma$.
Then
\begin{equation}\label{eq:geodesic-curvature}
  \hat\kappa_g(\tau):=
      \inner{D_\tau\psi\bigl(\gamma'(\tau)\bigr)}{n(\tau)} 
         =\mu\left(
            \psi\bigl(\gamma'(\tau)\bigr),
                D_\tau\psi\bigl(\gamma'(\tau)\bigr)
              \right)
\end{equation}
is called the {\em $\E$-geodesic curvature\/} of $\gamma$,
which gives the geodesic curvature of the singular curve $\gamma$ 
with respect to the orientation of $\E$,
where $D_\tau=D_{d/d\tau}$.
Then the {\em singular curvature function\/} is defined by 
\begin{equation}\label{eq:sing-curve}
  \kappa_s(\tau):=\sign\bigl(d\lambda(\eta(\tau))\bigr)\hat \kappa_g(\tau),
\end{equation}
where 
$\sign\bigl(d\lambda(\eta(\tau))\bigr)$ denotes the sign of the function
$d\lambda(\eta)$ at $\tau$.
In a general parametrization of $\gamma=\gamma(t)$, the singular
curvature function is
\begin{equation}\label{eq:sing-curve-general}
 \kappa_s(t) = \sign\bigl(d\lambda(\eta(t))\bigr)
          \frac{
           \mu\bigl(\psi(\dot\gamma(t)),D_t\psi(\dot\gamma(t))\bigr)}{
       |\psi(\dot\gamma(t))|^3}
\qquad \left(\dot{~}=\frac{d}{dt}\right),
\end{equation}
where $|\xi|:=\sqrt{\inner{\xi}{\xi}}$ denotes the norm derived from 
the metric $\inner{~}{~}$.

\begin{proposition}[An intrinsic version of Theorem 1.6 in \cite{SUY}]
 The singular curvature function does not depend on 
 the orientation of $M^2$,  
 nor the orientation of $\E$,
 nor the parameter $t$ of the singular curve $\gamma(t)$.
\end{proposition}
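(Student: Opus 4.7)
The plan is to work directly from the general-parameter formula \eqref{eq:sing-curve-general} and verify separately that each of the three admissible changes---(a) a reparametrization of the singular curve $\gamma$, (b) a reversal of the orientation of $M^2$, and (c) a reversal of the orientation of $\E$ (i.e.\ the replacement $\mu\mapsto -\mu$)---leaves $\kappa_s$ unchanged. In each case the sign factor $\sign(d\lambda(\eta))$ and the $\mu$-ratio will each transform by a sign, and the point is to show that the two signs always multiply to $+1$.

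For the reparametrization step I would set $\tilde\gamma(s):=\gamma(t(s))$ and put $c:=dt/ds$. Then $\psi(\tilde\gamma'(s))=c\,\psi(\dot\gamma(t))$, and the Leibniz rule for the connection $D$ gives
\[
 D_s\psi\bigl(\tilde\gamma'(s)\bigr)
 = c^2\,D_t\psi\bigl(\dot\gamma(t)\bigr)+c'(s)\,\psi\bigl(\dot\gamma(t)\bigr).
\]
The $c'(s)$ term is proportional to $\psi(\dot\gamma)$ and is therefore killed by the skew-symmetry of $\mu$, so the numerator of the ratio is multiplied by $c^3$ while the denominator $|\psi(\dot\gamma)|^3$ is multiplied by $|c|^3$; the net effect on the $\mu$-ratio is a factor $\sign(c)$. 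On the other hand, the condition that $\{\tilde\gamma'(s),\tilde\eta(s)\}$ be positively oriented on $M^2$ forces $\tilde\eta=\sign(c)\,\eta$, so $\sign(d\lambda(\tilde\eta))=\sign(c)\,\sign(d\lambda(\eta))$ and the two sign changes cancel.

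For the two orientation reversals the verification is bookkeeping. If only the orientation of $\E$ is flipped, then $\mu\mapsto -\mu$, so $\lambda=\mu(\psi_u,\psi_v)$ changes sign and hence $d\lambda(\eta)$ changes sign---here $\eta$ is determined by $M^2$ alone and is unaffected---while the numerator $\mu(\psi(\dot\gamma),D_t\psi(\dot\gamma))$ also changes sign; the two flips cancel. If instead the orientation of $M^2$ is flipped, a positively oriented chart $(u,v)$ is replaced by $(v,u)$, so $\lambda\mapsto -\lambda$ and $d\lambda\mapsto -d\lambda$; but the positivity of $\{\dot\gamma,\eta\}$ now forces $\eta\mapsto -\eta$, so $d\lambda(\eta)$ is unchanged, while the $\mu$-ratio is manifestly independent of any orientation data on $M^2$. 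Hence $\kappa_s$ is invariant in all three cases.

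The only genuine subtlety is the reparametrization case, where one must notice that the second-derivative (acceleration) term produced by the Leibniz rule is annihilated by the skew-symmetry of $\mu$; the remaining verifications are direct sign-tracking and would follow the pattern of the proof of Theorem~1.6 of \cite{SUY} in the extrinsic setting.
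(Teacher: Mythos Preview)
Your argument is correct and follows essentially the same sign-tracking approach as the paper's proof, working from formula~\eqref{eq:sing-curve-general} instead of~\eqref{eq:sing-curve}. The one notable difference is that you treat an arbitrary reparametrization $t\mapsto t(s)$ and explicitly invoke the skew-symmetry of $\mu$ to kill the acceleration term, whereas the paper only records the orientation-reversal case $\gamma'\mapsto -\gamma'$, $\eta\mapsto -\eta$ and leaves the orientation-preserving reparametrization invariance as implicit; your version is thus slightly more complete.
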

\begin{proof}
 If the orientation of $M^2$ reverses, then $\lambda$ and $\eta$
 both change sign.
 If the orientation of $\E$ (i.e.\ the co-orientation) reverses, 
 then $\lambda$ and the $\E$-conormal $n$  both change sign.
 If $\gamma$ changes orientation, both $\gamma'$ and $\eta$ 
 change sign. 
 In all cases, the sign of $\kappa_s$ is unchanged.
\end{proof}

Let $\{e_1,e_2\}$ be a positive orthonormal frame field of $\E$ defined
on a domain $U\subset M^2$.
Then there exists a unique $1$-form $\omega$ on $U$ such that
\[
   D_Xe_1=-\omega(X)e_2,
   \qquad 
   D_Xe_2=\omega(X)e_1\qquad (X\in TU),
\]
which is called the {\em connection form\/} 
with respect to the frame $\{e_1,e_2\}$.
The exterior derivative $d\omega$ does not
depend on the choice of a positive frame $\{e_1,e_2\}$
and gives a (globally defined) $2$-form on $M^2$.
By the definition of $\omega$ we have
\begin{equation}\label{eq:d-omega}
   d\omega
        =K\, d\hat A=\begin{cases}
            \hphantom{-}K\, dA \qquad& \mbox{on $M_+$}, \\
             -K\, dA           \qquad& \mbox{on $M_-$}, 
           \end{cases}
\end{equation}
where $K$ is the Gaussian curvature of the first fundamental form
$ds^2$.
When $M^2$ is compact, the integration
\begin{equation}\label{eq:Euler}
 \chi^{}_{\E}:=\frac{1}{2\pi}\int_{M^2}K\, d\hat A=
            \frac{1}{2\pi}\int_{M^2}d\omega
\end{equation}
is an integer called the {\em Euler number\/} of $\E$.
\section{Peaks and the interior angles between singular curves}
\label{sec:class}

To formulate our generalized Gauss-Bonnet formula,
we define further singularities 
(which is the essentially the same definition as in \cite{SUY}):
\begin{definition}[Peaks]\label{def:peak}
 A singular point $p\in M^2$ (which is not an $A_2$-point)
 is called a {\em peak\/} 
 if there exists a coordinate neighborhood $(U;u,v)$ of $p$
 such that
 \begin{enumerate}
  \item\label{item:peak-1} 
       there are no singular points other than $A_2$-points on 
       $U\setminus \{p\}$,
  \item\label{item:peak-2} 
       the rank of the linear map $\psi_p\colon{}T_pM^2\to \E_p$
       at $p$ is equal to $1$, and
  \item\label{item:peak-3} 
       the singular set in $U$ consists
       of finitely many (possibly empty) $C^1$-regular curves 
       starting from $p$.
       (If such a set of regular curves is empty, 
       the peak $p$ is an isolated singular point.)
 \end{enumerate}
 If a peak is a non-degenerate singular point, it
 is called a {\em non-degenerate peak}.
 The singular set $\Sigma$ is said to {\em admit at most peaks\/}
 if it consists of $A_2$-points and peaks.
\end{definition}

Let $U$ be a sufficiently small neighborhood of a peak $p$ and 
$\sigma_1,\sigma_2$ two singular curves in  $U$ starting at $p$.
A domain $\Omega$ satisfying the following two conditions is
called a {\em singular sector\/} at $p$:
\begin{enumerate}
 \item The boundary of $\Omega\cap U$ consists of $\sigma_1,\sigma_2$
       and the boundary of $U$. 
 \item There are no singular points in $\Omega$.
\end{enumerate}
If the peak $p$ is isolated,
we also call the domain $\Omega=U\setminus\{p\}$ a singular
sector.
If $\Omega$ is a singular sector at $p$, the whole of $\Omega$
is contained in $M_+$ or $M_-$.
When $\Omega\subset M_+$ (resp. $\Omega\subset M_-$), it is called a 
{\em positive\/} ({\em negative}) singular sector.
If the number of singular sectors are more than two, 
the number of positive sectors is
equal to the number of negative sectors at each peak.

Swallowtails (or more generally $A_3$-points) 
are  examples of non-degenerate peaks, which have
two singular sectors.
There are singular points which are not peaks.
Typical examples are cone-like singularities 
which appear in rotationally symmetric surfaces in $\R^3$ of positive
constant Gaussian curvature. 
However, since  generic fronts (in the local sense) have only cuspidal edges
and swallowtails,
the set of fronts which admit at most peaks covers a sufficiently wide
class of fronts.

\begin{figure}
 \begin{center}
  \includegraphics{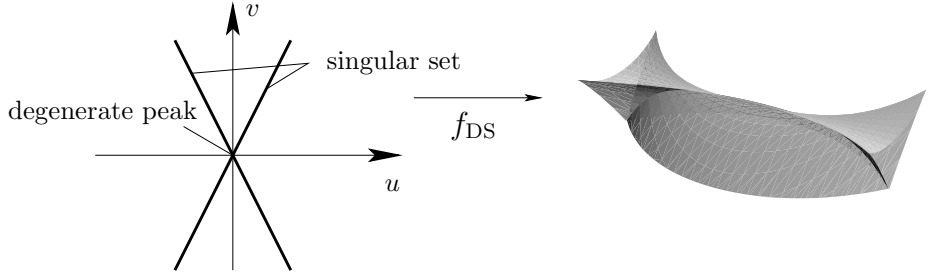}
 \end{center}
 \caption{A double swallowtail (Example~\ref{ex:double-swallow})}
 \label{fig:double-swallow}
\end{figure}
\begin{example}[A double swallowtail, {\cite[Example 1.11]{SUY}}]
 \label{ex:double-swallow}
 A {\em double swallowtail\/} (or a {\em cuspidal beaks})
 is a singular point which is 
 $\A$-equivalent to the $C^\infty$-map germ
 \[
     f_{{\DS}}(u,v) := (2u^3-uv^2,3u^4-u^2v^2,v)
 \]
 at $(u,v)=(0,0)$,  see Figure~\ref{fig:double-swallow}.
 Then 
 \[
    \nu_{\DS}^{} = \frac{1}{\sqrt{1+4u^2(1+u^2v^2)}}(-2u,1,-2u^2v)
 \]
 gives the unit normal vector of $f_{\DS}$.
 It can be easily checked that $f_{\DS}$ is a front.
 The signed area density function is 
 $\lambda = (v^2-6u^2)\sqrt{1+4u^2(1+u^2v^2)}$, and 
 then the singular set is 
 $\Sigma=\bigl\{v=\sqrt{6}u\bigr\}\cup \bigl\{v=-\sqrt{6}u\bigr\}$.
 In particular, the origin is a degenerate peak, whose neighborhood
 is divided into four singular sectors (two of them are positive).
\end{example}

\begin{figure}
 \begin{center}\footnotesize
  \begin{tabular}{c@{\hspace{1cm}}c}
   \includegraphics[width=5.5cm]{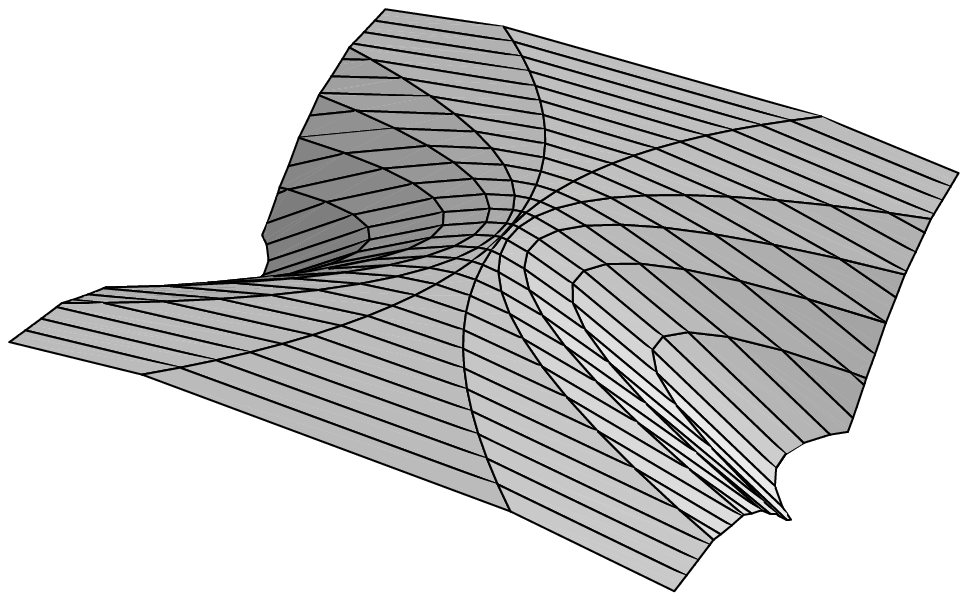}
   &
   \includegraphics[width=5.5cm]{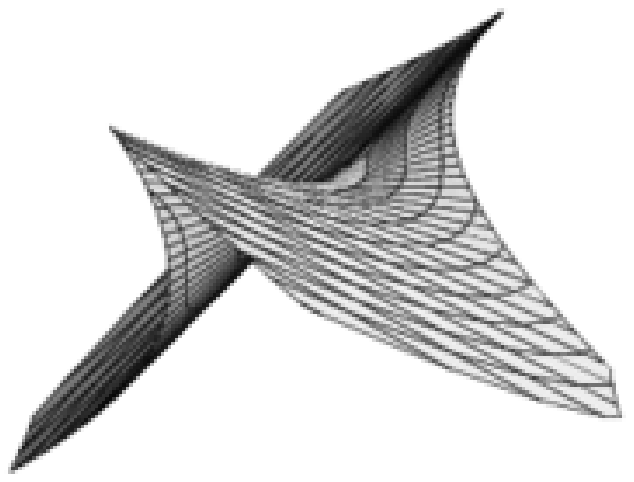}\\
    A cuspidal lips &
    Example~\ref{ex:nondeg} \\
    (Example~\ref{ex:cuspidal-lip}) 
   &
  \end{tabular}
 \end{center}
  \caption{A cuspidal lips and Example~\ref{ex:nondeg}.}
  \label{fig:lip-and-nondeg}
\end{figure}

\begin{example}[A cuspidal lips]
 \label{ex:cuspidal-lip}
 A {\em cuspidal lips\/} is a singular point which is defined by
 \[
     f_{{\CL}}(u,v)=(u^3+uv^2,3u^4+u^2v^2,v),
 \]
 see Figure \ref{fig:lip-and-nondeg}, left-hand side.
 Then
 \[
     \nu_{{\CL}}(u,v)=\frac{1}{\sqrt{1+u^2+16u^4v^2}}(-u,-1,4u^2v)
 \]
 gives the unit normal vector of $f_{\CL}$.
 It can be easily checked that $f_{\CL}$ is a front.
 The singular set is the origin, this is an example of
 degenerate peak without singular $A_2$-curves.
 (In \cite{IST}, useful criteria for 
  cuspidal lips and beaks are given.)
\end{example}

\begin{example}\label{ex:nondeg}
  The tangential developable of the space curve
 $t\mapsto (t^3,t^4,t^5)$ is given by
  \[
    f(t,u)  :=  (t^3+3u,t^4+4tu,t^5+5t^2u).
  \]
 Then $(0,0)$ is a non-degenerate peak
 which is not an $A_3$-point.
 See Figure~\ref{fig:lip-and-nondeg}, right-hand side.
 Ishikawa \cite{I} showed that
 the tangential developables of the space curves
 of the form
 \[
 \gamma(t)=\bigl(t^3 a(t),t^4b(t),t^5c(t)\bigr)
    \qquad \bigl(a(0)b(0)c(0)\ne 0\bigr)
 \]
 at $t=0$ are $\mathcal A$-equivalent to this
 example, where $a(t),b(t),c(t)$ are $C^\infty$-functions.
\end{example}

\begin{example}[The Scherbak surface]
 \label{ex:scherbak}
 The {\em Scherbak surface\/} is a singular point which is defined by
 \[
    f_{{\SB}}(u,v)=(u^3+u^2v,6u^5+5u^4v,v),
 \]
 see Figure \ref{fig:scherbak}.
 Then 
 \[
    \nu_{{\SB}}(u,v)=\frac{1}{\sqrt{1+100u^4+25u^8}}(10u^2,-1,-5u^4)
 \]
 gives the unit normal vector of $f_{{\SB}}$.
 It can be easily checked that $f_{{\SB}}$ is a frontal 
 (see the introduction for the definition).
 The singular set is two transversal lines
 $\{u=0\}\cup\{3u+2v=0\}$.
 The Scherbak surface is investigated in \cite{S,I,IC}.
\end{example}

\begin{figure}
 \begin{center}\footnotesize
   \includegraphics[width=5.5cm]{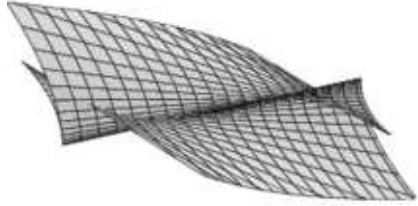}
 \end{center}
  \caption{The Scherbak surface (Example~\ref{ex:scherbak}).}
  \label{fig:scherbak}
\end{figure}

In this section, we fix a co-oriented
coherent tangent bundle $(\E,\inner{~}{~},D,\psi)$
on an oriented manifold $M^2$.
{\em Throughout this section, we assume the singular set
    $\Sigma$ consists of at most peak singularities.}

In the following discussions, we fix an arbitrary
Riemannian metric $g$ on $M^2$.
Since the first fundamental form $ds^2=\psi^*\inner{~}{~}$ 
degenerates on $\Sigma$,
it is useful to use such a metric $g$ to investigate the property of
the singular set $\Sigma$.
Then there exists a $(1,1)$-tensor field $I$ on $M^2$ such that
\[
    ds^2(X,Y)=g(IX,Y)\qquad (X,Y\in T_pM^2, p\in M^2).
\]
We fix a singular point $p\in \Sigma$.
Since $\Sigma$ only admits at most peaks, the kernel of $\psi_p$ is one
dimensional.
Thus only one of the eigenvalues of $I_{p}:T_{p}M^2\to T_{p}M^2$
vanishes.
So there exists a neighborhood $V$ of $p$ such that
$I_q$ has two distinct eigenvalues $0\le \lambda_1(q)< \lambda_2(q)$
for each $q\in V$. 
Since the eigenvectors of these two eigenvalues
$\lambda_1(q)$, $\lambda_2(q)$ depend smoothly on $q\in V$,
there exists a coordinate neighborhood $(U;u,v)$ of $p$
such that $U\subset V$ and 
the $u$-curves (resp.\ the $v$-curves) give the
$\lambda_1$-eigendirections 
(resp.\ the $\lambda_2$-eigendirections) of $I$ on $U$.
We call such a local coordinate system $(U;u,v)$
a {\em $g$-coordinate system\/} at the singular point $p$.

\begin{proposition}\label{prop:limit}
 Let $(U;u,v)$ be a $g$-coordinate system at a peak $p$, and
 $\gamma(t)$ $(0\le t<1)$ a $C^1$-regular curve on $U$
 emanating from $p$ such that
 \begin{enumerate}
  \item $\dot \gamma(0)=d\gamma/dt|_{t=0}$ is not a null-vector, or
  \item $\gamma$ is a singular curve,
 \end{enumerate} 
 Then there exists a limit
 \[
     \Psi_\gamma:=\lim_{t\to +0}
        \frac{\psi\bigl(\dot \gamma(t)\bigr)}{%
             |\psi\bigl(\dot \gamma(t)\bigr)|} 
              \in \E_p.
 \]
\end{proposition}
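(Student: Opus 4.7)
The plan is to exploit the $g$-coordinate frame, in which the first fundamental form diagonalizes orthogonally, so that the only dangerous part of $\psi(\dot\gamma)/|\psi(\dot\gamma)|$ comes from the $u$-direction (the null direction at $p$), and then to observe that this part vanishes identically along any singular curve.

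First I would introduce the $g$-orthonormal frame $e_1:=\partial_u/|\partial_u|_g$ and $e_2:=\partial_v/|\partial_v|_g$ on $U$. Since $\partial_u,\partial_v$ diagonalize $I$ with eigenvalues $\lambda_1\le \lambda_2$, a short computation gives
\[
 \inner{\psi(e_i)}{\psi(e_j)}=g(Ie_i,e_j)=\lambda_i\delta_{ij},
\]
so $\psi(e_1)\perp\psi(e_2)$ in $\E$ with $|\psi(e_i)|^2=\lambda_i$. The peak assumption $\rank\psi_p=1$ forces $\lambda_1(p)=0$ and $\lambda_2(p)>0$. Writing $\dot\gamma(t)=a(t)e_1+b(t)e_2$ with $a,b$ continuous on $[0,1)$,
\[
 \psi(\dot\gamma)=a\,\psi(e_1)+b\,\psi(e_2),\qquad |\psi(\dot\gamma)|^2=a^2\lambda_1+b^2\lambda_2,
\]
both pulled back to $\gamma$.

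In case (1) the hypothesis that $\dot\gamma(0)$ is not a null vector means $\dot\gamma(0)\notin \Ker\psi_p=\R\,e_1(p)$, hence $b(0)\ne 0$. Then $\psi(\dot\gamma(0))=b(0)\psi_p(e_2)\ne 0$, so the limit $\Psi_\gamma$ exists by continuity and equals $\sign(b(0))\,\psi_p(e_2)/\sqrt{\lambda_2(p)}$. This already subsumes case (2) whenever $\dot\gamma(0)$ is not a null vector, leaving only case (2) with $\dot\gamma(0)$ in the null direction, i.e.\ $a(0)\ne 0$ and $b(0)=0$. Here I would use that the singular set in $U$ coincides with $\{\lambda_1=0\}$, so $\lambda_1(\gamma(t))\equiv 0$ along $\gamma$; since $|\psi(e_1)|^2=\lambda_1$, this forces $\psi(e_1)(\gamma(t))\equiv 0$, and along $\gamma$
\[
 \frac{\psi(\dot\gamma(t))}{|\psi(\dot\gamma(t))|}=\sign(b(t))\,\frac{\psi(e_2)(\gamma(t))}{\sqrt{\lambda_2(\gamma(t))}},
\]
whose right-hand side tends to $\pm\psi_p(e_2)/\sqrt{\lambda_2(p)}$ provided $\sign(b(t))$ is constant on some $(0,\varepsilon)$.

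The main obstacle is precisely that last point: showing $b(t)\ne 0$ for small $t>0$. I would resolve it via the peak hypothesis, which guarantees the singular points on $\gamma\setminus\{p\}$ are $A_2$-points; at such a point the singular direction $\dot\gamma(t)$ is transversal to the null direction $e_1(\gamma(t))$, so $b(t)\ne 0$ for small $t>0$, and continuity of $b$ then fixes $\sign(b(t))$ on a one-sided neighborhood of $0$ and produces the asserted limit.
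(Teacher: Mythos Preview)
Your argument is correct and follows essentially the same line as the paper's proof: in the $g$-coordinate system the $u$-direction is the null direction along any singular curve (the paper phrases this as $\psi_u=0$ on $\gamma$, you as $\lambda_1\circ\gamma\equiv0\Rightarrow\psi(e_1)\circ\gamma\equiv0$), so $\psi(\dot\gamma)/|\psi(\dot\gamma)|$ reduces to $\sign(\dot v)\,\psi_v/|\psi_v|$, and the $A_2$-condition at $\gamma(t)$ for $t>0$ forces $\dot v(t)\ne 0$, fixing the sign. One small presentational advantage of your write-up is that you derive $b(t)\ne 0$ directly from the definition of an $A_2$-point, whereas the paper forward-references Proposition~\ref{thm:class} for the same fact.
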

We call this limit vector $\Psi_\gamma$ 
the {\em $\E$-initial vector\/} of $\gamma$ at $p$.
\begin{proof}
 If $\dot \gamma(0)$ is not a null-vector, the assertion is
 obvious. 
 So we may assume that $\gamma$ is a singular curve such that 
 $\dot \gamma(0)$ is a null-vector.
 We fix a $g$-coordinate system  $(U;u,v)$ at the peak $p$, and 
 write
 \[
    \gamma(t)=\bigl(u(t),v(t)\bigr)\qquad (0\le t<1).
 \]
 Since $\gamma$ is a singular curve, $(\psi_u:=)\psi(\partial/\partial u)$
 vanishes on $\gamma$.
 So we have
 \[
    \psi\bigl(\dot \gamma(t)\bigr)=
        \dot v \,\psi_v\bigl(\gamma(t)\bigr)
          \qquad (0<t<1) ,
 \]
 where
 $\psi_v(q):=\psi_{q}(\partial/\partial v)$ for $q\in U$.
 Since $\psi_v(p)\ne 0$ by the definition of the $g$-coordinate system,
 we have
 \[
    \lim_{t\to +0}\frac{\psi(\dot \gamma)}{|\psi(\dot \gamma)|}
      =\left(\lim_{t\to+ 0} \sign \dot v\right)
           \frac{\psi_v(p)}{|\psi_v(p)|}.
 \]
 Since $\gamma(t)$ ($t>0$) consists of $A_2$-points,
 we have $\dot v\ne 0$ (see Proposition \ref{thm:class}). 
 Hence the sign of $\dot v(t)$ never changes on $t>0$
 and then the limit of $\sign(\dot v)$  exists,
 which proves the assertion.
\end{proof}

\begin{definition}\label{def:angle}
 Let $(U;u,v)$ be a local coordinate system centered at a peak $p$ and
 $\gamma_j(t)$ ($0\le t<1$, $j=1,2$) two $C^1$-regular curves 
 in $U$ emanating from $p$ satisfying the assumption of 
 Proposition~\ref{prop:limit}.
 (We might not choose $(u,v)$ to be a $g$-coordinate system here.)
 Then the angle 
 \[
     \arccos\left(\inner{\Psi_{\gamma_1}}{\Psi_{\gamma_2}}
                         \right)\in [0,\pi]
 \]
 is called the {\em angle between the initial vectors\/} 
 of $\gamma_1$, $\gamma_2$.
\end{definition}
Now, we define the interior angle of a singular sector.
While it may take a value greater than $\pi$, we have to 
divide the singular sector into subsectors such that 
the ``interior angle'' does not exceed $\pi$.

First, we assume that $\Omega$ is  bounded by two singular curves
$\sigma_0$ and $\sigma_1$.
Then there exist a positive integer $n$ and a sequence of 
$C^1$-regular curves starting at $p$
\[
  \sigma_0=\gamma_0,\quad
  \gamma_{1},\quad\dots,\quad\gamma_{n}=\sigma_1  
\]
in $\Omega$ 
satisfying the assumption of Proposition~\ref{prop:limit} 
such that 
\begin{align}
 &\text{
   \begin{minipage}[t]{0.8\linewidth}
     $\gamma_0$, \dots, $\gamma_n$ do not intersect
     each other in $\Omega$.
   \end{minipage}
  }\label{eq:interpol-0}\\
 &\text{
   \begin{minipage}[t]{0.8\linewidth}
      For each $j=1,\dots,n$, there exists a sector domain 
      $\omega_j\subset \Omega$ bounded by $\gamma_{j-1}$ and 
      $\gamma_j$ which does not intersect 
      $\gamma_k$  for each $k\neq j-1,j$.
   \end{minipage}
  }\label{eq:interpol-1}\\
 &\text{
   \begin{minipage}[t]{0.8\linewidth}
     If $n\geq 2$, 
     $\{\dot\gamma_{j-1}(0),\dot\gamma_j(0)\}$
     is linearly independent and positively oriented
     for each $j=1$, \dots, $n$.
   \end{minipage}
  }\label{eq:interpol-2}
\end{align}
  In the following Remark~\ref{rem:explicit},
  we give an explicit way to find $\{\gamma_j\}$.

Next, we assume that the peak $p$ is an isolated singular point.
In this case, there are no singular curves which bound the sector
$\Omega$, but we can take a sequence 
$\{\gamma_0,\gamma_1,\gamma_2\}$ satisfying 
\eqref{eq:interpol-0}--\eqref{eq:interpol-2}, and 
we set $\gamma_3=\gamma_0$.
See Case 6 in Remark~\ref{rem:explicit}.

In both cases, 
the {\em interior angle of the singular sector} $\Omega$ is 
defined as 
\begin{equation}\label{eq:interior-angle}
  \arccos\left(\inner{\Psi_{\gamma_0}}{\Psi_{\gamma_1}}\right)
 +\arccos\left(\inner{\Psi_{\gamma_1}}{\Psi_{\gamma_2}}\right)
 +\cdots+
 \arccos\left(\inner{\Psi_{\gamma_{n-1}}}{\Psi_{\gamma_{n}}}\right).
\end{equation}

\begin{figure}
\begin{center}
\footnotesize
 \begin{tabular}{ccc}
  \includegraphics{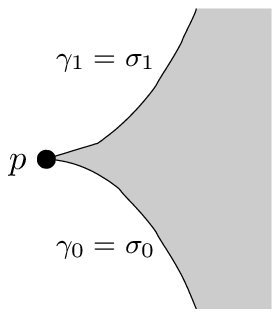} &
  \includegraphics{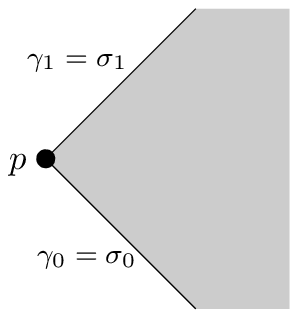} &
  \includegraphics{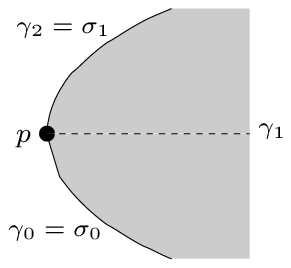} \\
  Case 1 & Case 2 & Case 3 \\[5mm]
  \includegraphics{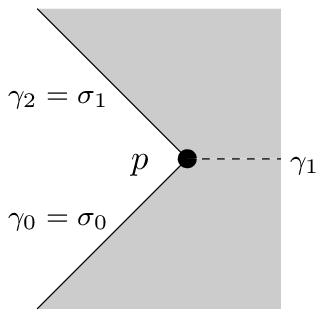} &
  \includegraphics{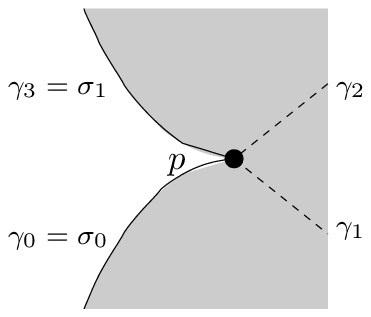} &
  \includegraphics{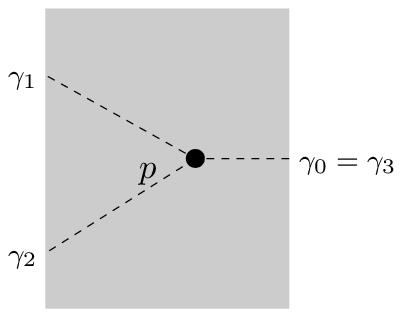} \\
  Case 4 & Case 5 & Case 6 
 \end{tabular}
\end{center}
  \caption{Six possibilities for $\Omega$}
  \label{fig:omega}
\end{figure}
\begin{remark}\label{rem:explicit}
 There are six possibilities
 for $\Omega$ as in  Figure \ref{fig:omega}.
 \begin{description}
  \item[Case 1]
     $\dot\sigma_0(0)=k\dot\sigma_1(0)$ where $k>0$
     and
     $\Omega$ does not contain
     the direction of $-\dot\sigma_0(0)$.
     In this case, we cannot take any interpolation, that is,
     we must take $n=1$.
  \item[Case 2]
     $\dot\sigma_0(0)$ and $\dot\sigma_1(0)$
     are transversal
     and
     $\Omega$ does not contain the directions of
     $-\dot\sigma_0(0)$ and
     $-\dot\sigma_1(0)$.
     In this case, we do not need interpolation,
     that is, we may take $n=1$.
 \item[Case 3]
     $\dot\sigma_0(0)=-k\dot\sigma_1(0)$ where $k>0$.
     In this case, we need
     an interpolation, namely, we may take $n=2$.
 \item[Case 4]
     $\dot\sigma_0(0)$ and $\dot\sigma_1(0)$
     are transversal and
     $\Omega$ contains the directions of
     $-\dot\sigma_0(0)$ and
     $-\dot\sigma_1(0)$.
     In this case, we need
     an interpolation, namely, we may take $n=2$.
 \item[Case 5]
     $\dot\sigma_0(0)=k\dot\sigma_1(0)$, where $k>0$
     and $\Omega$
     contains the directions of $-\dot\sigma_0(0)$.
     In this case, we need two interpolations, namely, we may take
     $n=3$.
 \item[Case 6] The peak is an isolated singular point.
      In this case, we need three curves.
 \end{description}
\end{remark}

Later at the end of this section, we shall prove the following:
\begin{maintheorem}\label{thm:A}
 Let $p\in M^2$ be a peak of a coherent tangent bundle
 $(\E,\inner{~}{~},D,\psi)$ on $M^2$. 
 Then the sum $\alpha_+(p)$ {\rm(}resp.\ $\alpha_-(p)${\rm)}
 of all interior angles of positive {\rm(}resp.\ negative{\rm)}
 singular sectors  at $p$ satisfies 
 \begin{align}
  \label{eq:sum}    &\alpha_+(p)+\alpha_-(p)=2\pi, \\
  \label{eq:diff}   &\alpha_+(p)-\alpha_-(p)\in \{2\pi,0,-2\pi\}.
 \end{align}
\end{maintheorem}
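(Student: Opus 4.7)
The plan is to use the $g$-coordinate system $(U;u,v)$ at $p$ to reduce the theorem to a counting argument on the unit circle of $T_pM^2$. Since $p$ is a peak, $\psi_p$ has rank one and in these coordinates $\psi_u(p)=0$ while $\psi_v(p)\ne 0$; set $e:=\psi_v(p)/|\psi_v(p)|\in\E_p$. The central observation will be that every $\E$-initial vector at $p$ is $\pm e$: if $\dot\gamma(0)=(a,b)$ is non-null then $b\ne 0$ and $\psi(\dot\gamma(0))=b\,\psi_v(p)$, so $\Psi_\gamma=\sign(b)\,e$, while if $\gamma$ is a singular curve with null tangent at $p$ the proof of Proposition~\ref{prop:limit} yields $\Psi_\gamma=\bigl(\lim_{t\to+0}\sign(\dot v(t))\bigr)\,e$. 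Hence each arccos appearing in \eqref{eq:interior-angle} is $0$ or $\pi$, and the interior angle of any singular sector equals $\pi$ times the number of sign changes in the sequence $\Psi_{\gamma_0},\dots,\Psi_{\gamma_n}$.

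Next I would interpret these sign changes globally. Let $\tau_1,\dots,\tau_k$ be the singular curves issuing from $p$ in counter-clockwise order, with initial tangent directions $v_1,\dots,v_k\in S^1\subset T_pM^2$, and write $\Psi_{\tau_j}=\epsilon_j e$ with $\epsilon_j\in\{+,-\}$. Define a $\{+,-\}$-valued function $\varphi$ on the relevant directions by $\varphi(a,b):=\sign b$ when $b\ne 0$ and $\varphi(v_j):=\epsilon_j$ when $v_j=\pm\partial/\partial u$ is a null direction. The key observation gives $\Psi_\gamma=\varphi(\dot\gamma(0))\,e$ for every curve appearing in an interpolation. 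The conditions \eqref{eq:interpol-0}--\eqref{eq:interpol-2} force $\dot\gamma_0(0),\dots,\dot\gamma_n(0)$ to rotate monotonically counter-clockwise across the sector, so the interior angle of a sector equals $\pi$ times the number of jumps of $\varphi$ along that rotation. Concatenating the interpolation sequences of all singular sectors in counter-clockwise order produces a closed cycle on $S^1$ making exactly one full revolution, with the shared endpoint values $\Psi_{\tau_j}$ matching at consecutive sectors. A direct case check---comparing $\sign b$ on the two sides of each of $\pm\partial/\partial u$, with the value $\varphi(v_j)=\epsilon_j$ inserted whenever a null direction coincides with some $v_j$---shows that exactly one jump of $\varphi$ occurs at each of the two null directions and that $\varphi$ is locally constant elsewhere. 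The total number of jumps is therefore $2$, which gives $\alpha_+(p)+\alpha_-(p)=2\pi$ and proves \eqref{eq:sum}.

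For \eqref{eq:diff}, positive and negative singular sectors alternate across singular curves, so each sector carries a definite sign. Each of the two jumps of $\varphi$ is attributed to a unique singular sector, yielding three possibilities: both jumps fall in positive sectors, both in negative sectors, or one in each. These give $\alpha_+(p)-\alpha_-(p)$ equal to $2\pi$, $-2\pi$, or $0$ respectively, proving \eqref{eq:diff}. The step I expect to be the main obstacle is the bookkeeping for the degenerate configuration in which a singular curve $\tau_j$ has its tangent in a null direction: the definition $\varphi(v_j)=\epsilon_j$ using $\sign(\dot v(t))$, together with the monotone-rotation property supplied by \eqref{eq:interpol-2}, must be verified to ensure that each such configuration still contributes exactly one jump, assigned unambiguously to a single adjacent sector, so that the global count of $2$ is preserved.
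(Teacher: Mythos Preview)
Your approach is essentially the paper's: both rest on the observation that in a $g$-coordinate system every $\E$-initial vector at $p$ is $\pm e$ with $e=\psi_v(p)/|\psi_v(p)|$, so each term in \eqref{eq:interior-angle} is $0$ or $\pi$, and the total around $p$ is $2\pi$. The difference is packaging. The paper does not work with tangent \emph{directions} on $S^1$ at all; instead it classifies admissible curves as ``upper'' or ``lower'' according to which open half-plane of the $g$-coordinate system they lie in for $t>0$ (Proposition~\ref{thm:class} guarantees each curve stays in one half-plane), proves this class alone determines the sign of $\Psi_\gamma$ (Proposition~\ref{prop:g-angle}), and then reads off the sector angle directly (Corollary~\ref{cor:angle}): $0$, $\pi$, or $2\pi$ according to whether the sector meets neither, one, or both rays of the $u$-axis. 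From there $\alpha_+,\alpha_-\in\{0,\pi,2\pi\}$ and $\alpha_++\alpha_-=2\pi$ are immediate.

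This reorganisation is exactly what dissolves the obstacle you flagged. Your function $\varphi$ is defined on tangent directions, so a singular curve with $\dot\tau_j(0)=\pm\partial/\partial u$ forces an ad hoc value $\varphi(v_j):=\epsilon_j$ and a delicate argument that the jump is assigned to a single adjacent sector. In the paper's setup there is nothing to check: such a curve is still unambiguously ``upper'' or ``lower'' by where it goes for $t>0$, and the sector angle depends only on which half-planes its two boundary curves occupy, never on their initial tangents. Your jump-count on $S^1$ and the paper's ray-count on the $u$-axis are the same count, but the latter avoids the degeneracy. A small further point: you assert that positive and negative sectors alternate; this is true because each singular arc near $p$ consists of non-degenerate singular points, across which $\lambda$ changes sign, but it deserves a sentence of justification.
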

\begin{definition}\label{def:positive}
 A peak $p$ is called {\em positive}, {\em null}, {\em negative}
 according to the sign of $\alpha_+(p)-\alpha_-(p)$.
\end{definition}

\begin{remark}
 The formulas
 \eqref{eq:sum} and  \eqref{eq:diff}  are intrinsic versions
of (2.2) and (2.3) in \cite{SUY} respectively. 
\end{remark}

For our further analysis of the singular curvature near a peak,
we prepare the following assertion which is the
intrinsic version of \cite[Proposition 1.12]{SUY}.
\begin{proposition}[Boundedness of the singular curvature measure]
\label{prop:bounded-curvature-measure}
 Take  a singular curve $\gamma\colon{}[0,\varepsilon)\to M^2$
 starting from a peak $p$ such that  $\gamma(t)$ is a $A_2$-point 
 for each $t>0$. 
 Then the singular curvature measure $\kappa_s\,d\tau$ is continuous
 on $[0,\varepsilon)$, where $d\tau$ is the arclength measure with 
 respect to the first fundamental form $ds^2$.
\end{proposition}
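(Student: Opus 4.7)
The plan is to reduce everything to a direct computation in a $g$-coordinate system $(U;u,v)$ centered at the peak $p$, shrinking $\varepsilon$ if necessary so that $\gamma([0,\varepsilon))\subset U$. The point of using such coordinates is that the $u$-curves give the $\lambda_1$-eigendirection of the tensor $I$, and $\lambda_1$ vanishes precisely on $\Sigma$; hence at any singular point $q\in \Sigma\cap U$ the vector $\partial/\partial u|_q$ lies in $\ker I_q=\ker\psi_q$. In other words, $\psi_u:=\psi(\partial/\partial u)$ vanishes identically along the singular curve $\gamma$, whereas $\psi_v:=\psi(\partial/\partial v)$ satisfies $|\psi_v(p)|\neq 0$ and so is bounded away from zero on a neighborhood of $p$.

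Writing $\gamma(t)=(u(t),v(t))$ and using $\psi_u|_\gamma\equiv 0$, I would next compute
\[
 \psi(\dot\gamma)=\dot v\,\psi_v(\gamma),\qquad
 D_t\psi(\dot\gamma)=\ddot v\,\psi_v(\gamma)+\dot v\,D_t\psi_v(\gamma).
\]
Since $\mu$ is alternating, $\mu(\psi_v,\psi_v)=0$ kills the $\ddot v$-term. Substituting into the general-parameter formula \eqref{eq:sing-curve-general} with $d\tau=|\psi(\dot\gamma)|\,dt$, the factor $\dot v(t)^2$ cancels between numerator and denominator, leaving
\[
 \kappa_s\,d\tau
   =\sign\bigl(d\lambda(\eta(t))\bigr)\,
      \frac{\mu\bigl(\psi_v(\gamma(t)),\,D_t\psi_v(\gamma(t))\bigr)}{
            |\psi_v(\gamma(t))|^2}\,dt.
\]
The cancellation is legitimate on $(0,\varepsilon)$ because at an $A_2$-point the singular direction is transversal to the null direction, so $\dot v(t)\neq 0$ there; at $t=0$ the right-hand side is still well-defined and serves as the continuous extension.

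Because $\gamma$ is $C^1$ on $[0,\varepsilon)$ and $\psi_v$ is smooth on $U$, both the numerator and the denominator on the right-hand side are continuous functions of $t$, and the denominator is nowhere zero. Thus the fraction alone defines a continuous $1$-form on $[0,\varepsilon)$. It remains only to handle the sign factor: for $t\in(0,\varepsilon)$ every $\gamma(t)$ is an $A_2$-point, so $d\lambda(\eta(t))\neq 0$ on this connected interval, and by the intermediate value theorem $\sign\bigl(d\lambda(\eta(t))\bigr)$ is constant there. Defining $\kappa_s\,d\tau$ at $t=0$ by the one-sided limit then produces a continuous $1$-form on all of $[0,\varepsilon)$, as asserted.

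The main technical point I anticipate is the verification that $\psi_u$ vanishes along \emph{the whole} singular curve, not merely at $p$; this is precisely the reason for passing to $g$-coordinates at the outset, since in these coordinates it follows immediately from the identification of $\ker\psi_q$ with the $\lambda_1$-eigendirection at each $q\in\Sigma\cap U$. Once that identification is in hand the entire argument reduces to the algebraic cancellation displayed above.
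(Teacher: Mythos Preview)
Your proposal is correct and follows essentially the same approach as the paper: pass to a $g$-coordinate system so that $\psi_u\equiv 0$ along $\gamma$, compute $\psi(\dot\gamma)=\dot v\,\psi_v$ and $D_t\psi(\dot\gamma)=\ddot v\,\psi_v+\dot v\,D_t\psi_v$, and use the alternating property of $\mu$ to cancel the potentially problematic $\dot v$ factors, arriving at $\kappa_s\,d\tau=\pm\mu(\psi_v,D_t\psi_v)/|\psi_v|^2\,dt$. Your write-up is in fact slightly more careful than the paper's, which simply writes $\pm$ for the sign and asserts boundedness; you spell out why the sign factor is locally constant and why $\psi_u$ vanishes along all of $\gamma$, not just at $p$.
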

\begin{proof}
 We can take a $g$-coordinate system $(u,v)$ such that 
 $\partial/\partial u$  is the null vector field on $\gamma$.
 For the sake of simplicity, we set
 \[
   \psi_u:=\psi(\partial/\partial u),\quad
   \psi_v:=\psi(\partial/\partial v), 
   \quad \dot {\hat \gamma}(t):=\psi(\dot \gamma(t))
\qquad
    \left(\dot{~}=\frac{d}{dt}\right)
 \]
 In such a coordinate system,  $\psi_u=0$ and $D_t \psi_u=0$ hold on
 $\gamma$.
 Then 
 \[
    \dot{\hat\gamma} = \dot v\psi_v,\qquad
    D_t\dot {\hat\gamma} = \ddot v \psi_v + \dot v D_t \psi_v.
 \]
 Hence 
 \begin{equation}\label{eq:singular-curvature-peak}
    \kappa_s =
    \pm \frac{\mu(\dot{\hat\gamma},D_t\dot{\hat\gamma})}{|\dot{\hat\gamma}|^3}
     = \pm \frac{\mu(\psi_v,D_t \psi_v)}{|\dot v|\,|\psi_v|^3}.
 \end{equation}
 Since $d\tau=|\dot{\hat\gamma}|\,dt=|\dot v|\,|\psi_v|\,dt$ and 
 $\psi_v\neq 0$,
 \[
    \kappa_s\,d\tau =\pm \frac{\mu(\psi_v,D_t\psi_v)}{|\psi_v|^2}\,dt
 \]
 is bounded.
\end{proof}

Then we can state  generalized Gauss-Bonnet formulas:
\begin{maintheorem}
\label{thm:B}
 Let $M^2$ be a compact oriented $2$-manifold and
 $\E$ a coherent tangent bundle whose singular set $\Sigma$
 admits at most peaks.
 Then 
 \begin{align}
  (\chi_{\E}^{}=)&\frac1{2\pi}\int_{M^2}K\, d\hat A=
        \chi(M_+)-\chi(M_-)+\#P_+-\#P_-, 
  \label{eq:B}\\
  2\pi\chi(M^2)&=
  \int_{M^2}K\, dA+2\int_{\Sigma} \kappa_s\, d\tau 
  \label{eq:A}
 \end{align}
 hold, where $d\tau$ is the arclength measure on the singular set
 and $\#P_+,\#P_-$ are the numbers of positive and negative 
 peaks respectively defined in Definition~\ref{def:positive}.
\end{maintheorem}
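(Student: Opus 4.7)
The plan is to apply the classical Gauss--Bonnet theorem with boundary and corners separately to the two Riemannian pieces $(\overline{M_+}, ds^2)$ and $(\overline{M_-}, ds^2)$, and then to \emph{add} the two resulting identities to obtain \eqref{eq:A} and to \emph{subtract} them to obtain \eqref{eq:B}. To get genuine compact manifolds with piecewise-$C^1$ boundary and corners, I first blow up each non-isolated peak $p$ into $m_+(p)$ corner points of $\overline{M_+}$ (one per positive sector) and $m_-(p)$ corner points of $\overline{M_-}$; by \eqref{eq:sum} the interior angles at these corners sum to $\alpha_+(p)$ and $\alpha_-(p)$ respectively. An isolated peak becomes instead an interior cone point of exactly one of $\overline{M_\pm}$, but by \eqref{eq:sum} its cone angle is $2\pi$, so its Gauss--Bonnet defect $2\pi-\alpha_\pm(p)$ vanishes. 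Proposition~\ref{prop:bounded-curvature-measure} ensures that $\kappa_s\,d\tau$ extends continuously up to every peak, so all boundary integrals are well defined.

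The main technical step is to identify $\kappa_s$ with the usual Riemannian geodesic curvature of $\gamma$ viewed as a boundary component of $\overline{M_+}$ (with its induced orientation) and, independently, as a boundary component of $\overline{M_-}$ (with the opposite induced orientation). Since on $M_+$ the bundle map $\psi$ is an orientation-preserving isomorphism pulling $\inner{~}{~}$ back to $ds^2$, the $\E$-geodesic curvature $\hat\kappa_g$ literally equals the one-sided geodesic curvature from $M_+$; the sign factor $\sign(d\lambda(\eta))$ in \eqref{eq:sing-curve} is precisely what makes $\kappa_s$ invariant under the swap $M_+ \leftrightarrow M_-$, since that swap reverses both the co-orientation $\mu$ (flipping $\hat\kappa_g$) and the sign of $d\lambda(\eta)$. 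Consequently
\[
  \int_{\partial\overline{M_+}}\kappa_g^{+}\,ds
  \;=\;\int_{\partial\overline{M_-}}\kappa_g^{-}\,ds
  \;=\;\int_\Sigma \kappa_s\,d\tau,
\]
so the two boundary integrals add to $2\int_\Sigma \kappa_s\,d\tau$ and cancel under subtraction.

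Summing the two Gauss--Bonnet identities yields \eqref{eq:A}: at each non-isolated peak $p$ the combined corner contribution from the two sides is $\pi\,m(p)-(\alpha_+(p)+\alpha_-(p))=\pi\,m(p)-2\pi$ by \eqref{eq:sum}, where $m(p):=m_+(p)+m_-(p)$; summing over non-isolated peaks gives a total of $2\pi E - 2\pi V^{\text{non-iso}}$ (using $\sum m(p)=2E$, where $E$ counts $A_2$-arcs joining peaks), which cancels precisely against the combinatorial identity $\chi(\overline{M_+}^{\mathrm{blow}})+\chi(\overline{M_-}^{\mathrm{blow}})=\chi(M^2)+E-V^{\text{non-iso}}$ produced by CW-counting and additivity of the compactly supported Euler characteristic. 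Subtracting the two identities instead produces \eqref{eq:B}: the left-hand side becomes $\int_{M^2}K\,d\hat A=2\pi\chi_\E$; since $m_+(p)=m_-(p)$ at every non-isolated peak, the corner differences there reduce to $-(\alpha_+(p)-\alpha_-(p))$, which by \eqref{eq:diff} contribute $\mp 2\pi$ for positive/negative peaks and $0$ for null peaks, producing $-2\pi(\#P_+^{\text{non-iso}}-\#P_-^{\text{non-iso}})$; a parallel analysis of $\chi(\partial\overline{M_\pm}^{\mathrm{blow}})$ gives $\chi(\overline{M_+}^{\mathrm{blow}})-\chi(\overline{M_-}^{\mathrm{blow}})=(\chi(M_+)-\chi(M_-))+(V_+^{\text{iso}}-V_-^{\text{iso}})$ with $\chi(M_\pm)$ interpreted as the compactly supported Euler characteristic, so the isolated peaks account exactly for the remaining terms in $\#P_+-\#P_-$. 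The main obstacle is the careful combinatorial bookkeeping distinguishing isolated from non-isolated peaks and matching corner contributions against cone-angle contributions so that the two auxiliary cancellations coincide.
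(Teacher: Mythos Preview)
Your global strategy is exactly the one the paper uses in Section~\ref{sec:globalGB}: apply a Gauss--Bonnet identity to $M_+$ and to $M_-$ separately, then add (to get \eqref{eq:A}) and subtract (to get \eqref{eq:B}), using \eqref{eq:sum}, \eqref{eq:diff}, and the graph formula $\chi(\Sigma)=\sum_p(1-m(p))$. Your combinatorial bookkeeping with blow-ups is a harmless repackaging of the paper's use of $\chi(M^2)=\chi(M_+)+\chi(M_-)+\chi(\Sigma)$.

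The genuine gap is your appeal to ``the classical Gauss--Bonnet theorem with boundary and corners'' on $(\overline{M_\pm},ds^2)$. The first fundamental form $ds^2=\psi^*\inner{~}{~}$ has rank~$1$ at every point of $\Sigma=\partial M_\pm$ (the null direction has zero length), so $(\overline{M_\pm},ds^2)$ is \emph{not} a Riemannian manifold with boundary, and the classical theorem does not apply. Concretely: the Gaussian curvature $K$ may blow up as you approach $\Sigma$; the ``inward normal'' needed to define one-sided geodesic curvature degenerates; and $\kappa_s$ itself is in general unbounded near a peak (only the \emph{measure} $\kappa_s\,d\tau$ is continuous, by Proposition~\ref{prop:bounded-curvature-measure}, as you note). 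Similarly, an isolated peak is not a cone point of a Riemannian metric but a genuine degeneration of $ds^2$, so ``cone angle $2\pi$ implies no defect'' is not automatic.

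This is precisely why the paper devotes all of Section~\ref{sec:localGB} to proving a local Gauss--Bonnet formula (Theorem~\ref{thm:main}) for admissible triangles, built up through Lemmas~\ref{lem1}--\ref{lem6} by approximating from inside $M_\pm$ and passing to the limit using the $g$-coordinate system and Propositions~\ref{prop:limit}, \ref{prop:bounded-curvature-measure}, \ref{prop:g-angle}. That theorem is the substitute for the classical Gauss--Bonnet in this degenerate setting, and once it is in hand, the Section~\ref{sec:globalGB} argument is the short add/subtract computation you describe. Your sketch correctly identifies what happens \emph{after} the hard work, but skips the hard work itself.
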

The identity \eqref{eq:B} and \eqref{eq:A} are generalizations 
of \eqref{eq:GB-signed} and \eqref{eq:GB-unsigned} in the introduction,
respectively.
The proof is given in Section~\ref{sec:globalGB}.

It should be remarked that the integral $\int_{\Sigma} \kappa_s\, d\tau$
is well-defined by Proposition~\ref{prop:bounded-curvature-measure}.
In \cite{SUY}, the authors did not state  Theorem~\ref{thm:B}
intrinsically as above.
The two formulas \eqref{eq:B} and \eqref{eq:A} are not only
generalizations of the formulas given in the introduction, 
but also those in \cite{SUY}.

To prove Theorem~\ref{thm:A}, we need a tool to measure 
the interior angle between ``curves'' starting at a peak.
We define a class of curves such that the interior angles
are well-defined:
\begin{definition}[Admissible curves]\label{def:admissible-curve}
 A curve $\sigma(t)$ $(t\in [a,b])$ on $U$
 is called {\em admissible\/} if it satisfies one of the 
 following conditions:
 \begin{enumerate}
  \item $\sigma$ is a $C^1$-regular curve such that 
    $\sigma\bigl((a,b)\bigr)$ does  not contain a  peak,
    and the tangent vector $\dot\sigma(t)$ ($t\in [a,b]$)
    is transversal to the singular direction and the null direction
    if $\sigma(t)\in\Sigma$.
  \item The set $\sigma([a,b])$ is contained in a singular set $\Sigma$
    and the set $\sigma\bigl((a,b)\bigr)$ does not contain a peak. 
 \end{enumerate}
\end{definition}

Next, we shall prove the following assertion, 
which will play a crucial role in the proof of Theorem~\ref{thm:A}.

\begin{proposition}\label{thm:class}%
 Suppose that $p$ is a peak. 
 Then there exists a $g$-coordinate system 
 $(U;u,v)$ such that each admissible curve $\gamma(t)$ 
 {\rm (}$t\ge 0${\rm )}
 starting at $p$ does not have velocity vector $\dot\gamma(t)$
 parallel to the $u$-axis on $U$\!. 
 In particular,
 $\gamma(t)$ {\rm (}$t> 0${\rm )} never meets the $u$-axis.
\end{proposition}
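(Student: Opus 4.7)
My plan is to work in the $g$-coordinate system $(U;u,v)$ constructed in the paragraphs preceding the statement, exploiting the fact that $\partial/\partial u$ is, at each point $q\in U$, an eigenvector of $I$ for the smaller eigenvalue $\lambda_1$. Since $\Sigma=\{\lambda_1=0\}$, at every singular point $q\in U$ the coordinate vector $\partial/\partial u|_q$ spans the null direction $\ker\psi_q$. I shrink $U$ at the outset so that, by condition \ref{item:peak-1} of Definition~\ref{def:peak}, $\Sigma\cap(U\setminus\{p\})$ consists entirely of $A_2$-points; thus the identification ``null direction $=\partial/\partial u$'' holds at every singular point other than $p$ as well.

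The heart of the proof is the type-(2) case. Let $\gamma(t)$ be a singular curve starting at $p$. For each $t>0$, the point $\gamma(t)$ is an $A_2$-point, so by Definition~\ref{def:ak-point} the singular direction $\dot\gamma(t)$ is transversal to the null direction at $\gamma(t)$; since the null direction there is $\partial/\partial u|_{\gamma(t)}$, this forces $\dot v(t)\neq 0$, and $\dot\gamma(t)$ is never parallel to the $u$-axis. For a type-(1) admissible curve $\gamma$ starting at $p$, the admissibility condition at $\gamma(0)=p\in\Sigma$ gives $\dot\gamma(0)$ transversal to the null direction at $p$, so $\dot v(0)\neq 0$, and the same transversality yields $\dot v(t)\neq 0$ at every later $t$ with $\gamma(t)\in\Sigma$. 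On the intermediate subintervals where $\gamma(t)$ is regular, continuity of $\dot v$ together with a further shrinking of $U$ (so that $\gamma$ has too little arclength to rotate its tangent into $\partial/\partial u$) keeps $\dot v$ of constant sign, giving $\dot v(t)\neq 0$ on the whole domain.

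The ``in particular'' conclusion is then immediate: from $v(\gamma(0))=0$ and $\dot v$ of constant sign on $(0,\varepsilon)$, the function $v\circ\gamma$ is strictly monotonic on a right neighborhood of $0$, so $v(\gamma(t))\neq 0$ for $t>0$ and $\gamma$ does not meet the $u$-axis $\{v=0\}$ for $t>0$. The main obstacle is the type-(1) case at parameters $t$ where $\gamma(t)\notin\Sigma$, since admissibility imposes no direct constraint on $\dot v$ there; this is handled by the combination of continuity of $\dot v$ and the shrinking-of-$U$ argument, and is what dictates the careful choice of the coordinate neighborhood.
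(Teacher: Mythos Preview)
Your treatment of singular curves (type-(2)) is correct and matches the paper's argument: on a $g$-coordinate system the null direction at every singular point of $U$ is $\partial/\partial u$, so for $t>0$ the $A_2$-condition forces $\dot\gamma(t)\not\parallel\partial/\partial u$, i.e.\ $\dot v(t)\neq 0$. The paper phrases the same observation via a graph parametrization $v=F(u)$ in the subcase where $\dot\gamma(0)$ happens to be null, but the content is identical; the ``in particular'' then follows by monotonicity of $v$, which is exactly the mean-value-theorem remark in the paper.

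The gap is in your type-(1) argument. You assert that ``a further shrinking of $U$ (so that $\gamma$ has too little arclength to rotate its tangent into $\partial/\partial u$)'' keeps $\dot v$ nonvanishing on the regular subintervals. This cannot work: the proposition must produce a single neighborhood $U$ valid for \emph{every} admissible curve through $p$, and type-(1) curves carry no constraint whatsoever on $\dot\gamma$ at regular points. For any choice of $U$, there exist $C^1$-regular curves with $\gamma(0)=p$, $\dot v(0)\neq 0$, that wind inside $U\cap(M^2\setminus\Sigma)$ enough to make $\dot v$ vanish (and even return to $\{v=0\}$); no bound on arclength or curvature of an arbitrary $C^1$-curve is available to prevent this, so ``too little arclength'' has no force. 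The paper does not attempt the velocity assertion for type-(1) curves at all: its proof dispatches them in one sentence, claiming only the ``never meets the $u$-axis'' conclusion ``by definition \dots\ by the mean value theorem'', and then declares ``it is sufficient to consider only singular curves'' for the velocity clause. (That one-line argument for type-(1) is itself terse; but the velocity conclusion is invoked later only for singular curves, e.g.\ in the proof of Proposition~\ref{prop:limit}, so the narrower scope suffices for the paper's purposes.) Your attempt to extend the full velocity statement to type-(1) curves goes beyond what the paper establishes, and it is precisely there that the reasoning breaks.
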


\begin{proof}
 We fix a $g$-coordinate system $(U;u,v)$ at $p$.
 By definition, any admissible curve in $U$ which is not
 a singular curve never meets the $u$-axis, by the mean value theorem.
 So it is sufficient to consider only singular curves. 
 We now fix  a singular curve $\gamma(t)$ ($t\ge 0$) on $U$
 such that $\gamma(0)=p$.
 Since the number of singular curves starting at $p$ is finite, 
 it is sufficient to show that there exists a 
 (sufficiently small)  $\varepsilon>0$
 such that $\dot\gamma(t)$ will never be parallel to the $u$-axis on  
 $(0,\varepsilon]$.
 (The second assertion immediately follows from the mean value theorem.) 
 If $\dot\gamma(0)$ is transversal to the $u$-axis,
 it is obvious. 
 So we may assume that  $\dot\gamma(0)$ is proportional to the 
 $u$-axis.
 Then $\gamma$ can be expressed as a graph $v=F(u)$.
 If there exists $c\in (0,\delta)$ such that
 $dF/du$ vanishes at $u=c$, 
 then we have a contradiction
 since the null direction $\partial/\partial u$ is
 proportional to the singular direction.
 (On a $g$-coordinate system, the null direction always
 points in the $u$-direction on each singular curve, by its definition.)
\end{proof}

\begin{figure}
 \begin{center}
  \includegraphics{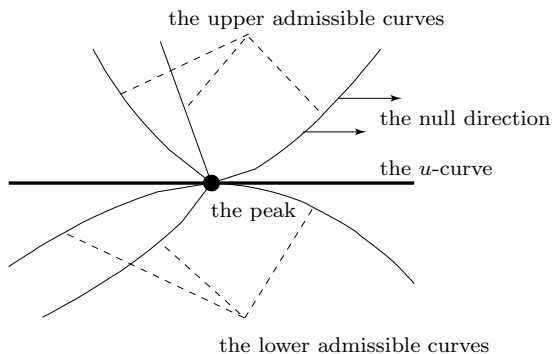}
 \end{center}
 \caption{Upper and lower admissible curves.}
 \label{fig:upper-lower}
\end{figure}

We divide the set of admissible curves
starting at the peak $p$ into the following two classes 
(see Figure~\ref{fig:upper-lower}):
\begin{itemize}
 \item The admissible curves which lie upper half-plane of
       the $g$-coordinate system are called the 
       {\em upper admissible curves},
 \item The admissible curves which lie lower half-plane 
       of the $g$-coordinate system are called the 
       {\em lower admissible curves}.
\end{itemize}

\begin{proposition}
 These two classes of the admissible curves starting from $p$
 are independent of the choice of Riemannian metric $g$ on $M^2$.
\end{proposition}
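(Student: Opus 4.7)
The plan is to characterise the upper/lower partition intrinsically via the $\E$-initial vector $\Psi_\gamma\in\E_p$ introduced in Proposition~\ref{prop:limit}. Since $\Psi_\gamma$ depends only on the coherent tangent bundle $(\E,\inner{~}{~},D,\psi)$ and not on the auxiliary Riemannian metric $g$, any description of the partition in terms of $\Psi_\gamma$ will establish independence of $g$.

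The starting observation is that, because $p$ is a peak, $\rank\psi_p=1$, so the image $\psi_p(T_pM^2)\subset\E_p$ is a one-dimensional subspace containing exactly two unit vectors, which I denote by $\pm v_0$. First I would check that $\Psi_\gamma\in\{+v_0,-v_0\}$ for every admissible curve $\gamma$ starting at $p$: for regular admissible curves this is immediate since $\psi(\dot\gamma(0))$ lies in $\psi_p(T_pM^2)$, and for singular admissible curves the proof of Proposition~\ref{prop:limit} gives the explicit formula
\[
 \Psi_\gamma=\Bigl(\lim_{t\to +0}\sign\dot v(t)\Bigr)\frac{\psi_v(p)}{|\psi_v(p)|}
\]
in any $g$-coordinate system $(u,v)$, with $\psi_v(p)\in\psi_p(T_pM^2)$ because $\partial/\partial v|_p$ is transversal to the null direction. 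Setting $\mathcal{C}_\pm:=\{\gamma\,;\,\Psi_\gamma=\pm v_0\}$ therefore produces an intrinsic partition of the admissible curves at $p$.

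Next I would verify that, in any $g$-coordinate system $(u,v)$, the upper class coincides with $\{\gamma\,;\,\Psi_\gamma=+\psi_v(p)/|\psi_v(p)|\}$. Because $\psi_u(p)=0$, the relation $\psi(\dot\gamma(0))=\dot v(0)\psi_v(p)$ gives the equivalence $v(\gamma(t))>0$ iff $\sign\dot v(t)=+1$ near $t=0$; this is trivial for regular admissible curves (since $v(t)\sim\dot v(0)\,t$) and, for singular admissible curves, follows because Proposition~\ref{thm:class} together with continuity of $\dot v$ implies $\dot v(t)$ is nonzero and of constant sign on a small interval $(0,\varepsilon)$, whence $v(t)=\int_0^t\dot v(s)\,ds$ has the same sign. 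To compare with another system $(u',v')$, write $\partial/\partial v'|_p=a\,\partial/\partial u|_p+b\,\partial/\partial v|_p$; the positive-orientation condition forces $b\ne0$, and $\psi_u(p)=0$ gives $\psi_{v'}(p)=b\,\psi_v(p)$, so $\psi_{v'}(p)/|\psi_{v'}(p)|=\sign(b)\,\psi_v(p)/|\psi_v(p)|$. Thus the upper class in $(u',v')$ equals the upper class in $(u,v)$ when $b>0$ and the lower class when $b<0$; in either case the set-theoretic partition $\{\mathcal{C}_+,\mathcal{C}_-\}$ is the same.

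The delicate point I expect to need the most care with is the equivalence $v(\gamma(t))>0\iff\sign\dot v(t)=+1$ for singular admissible curves whose tangent at $p$ lies in the null direction, because there $\dot v(0)=0$ and the sign of $v(t)$ cannot be read off from $\dot\gamma(0)$. This is exactly where Proposition~\ref{thm:class} is indispensable: it ensures $\dot v(t)\ne0$ on $(0,\varepsilon)$, so that $\dot v$ has a well-defined limiting sign as $t\to+0$ and $v$ is strictly monotone on $(0,\varepsilon)$. Once that is established, the remainder of the argument is bookkeeping with the sign of~$b$.
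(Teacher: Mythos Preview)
Your proof is correct and takes a genuinely different route from the paper's. The paper gives a one-line homotopy argument: the space of Riemannian metrics is convex, so one interpolates $g_t=(1-t)g+th$ between any two metrics; the $u$-axis of a $g_t$-coordinate system varies continuously in $t$, and by Proposition~\ref{thm:class} each admissible curve never meets that axis, so the side on which it lies cannot jump as $t$ runs over $[0,1]$. You instead identify the upper/lower partition with the sign of the $\E$-initial vector $\Psi_\gamma\in\{+v_0,-v_0\}$, which is manifestly metric-independent. This is essentially the content of Proposition~\ref{prop:g-angle}, which in the paper comes \emph{after} the present proposition; you have in effect reversed the logical order, establishing the intrinsic characterisation first and reading off metric-independence as a corollary. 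Your route is more explicit and yields extra information (the partition is literally $\{\Psi_\gamma=+v_0\}$ versus $\{\Psi_\gamma=-v_0\}$), while the paper's is shorter and avoids the case analysis. One small quibble: the reason $b\ne 0$ is simply that $\{\partial/\partial u'|_p,\partial/\partial v'|_p\}$ is a basis of $T_pM^2$ and $\partial/\partial u'|_p$ already lies in the null line $\operatorname{span}(\partial/\partial u|_p)$; orientation plays no role at this step.
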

\begin{proof}
 Let $h$ be another Riemannian metric on $M^2$.
 Then $(1-t)g+th$ ($t\in [0,1]$)
 gives the deformation between two metrics.
 During the deformation of the metric,
 each admissible curve never meets the $u$-axis
 for a fixed sufficiently small neighborhood of $p$.
\end{proof}

Note that admissible curves have initial vectors,
by Proposition~\ref{prop:limit}.
Now, we shall prove that the angle between
two admissible curves 
at a peak are determined by their classes:

\begin{proposition}\label{prop:g-angle}
 Let $\gamma_j(t)$ $(j=0,1)$ be two admissible curves
 starting at a peak $p$.
 Then the $\E$-initial vector $\Psi_{\gamma_1}$ is 
 equal to $\Psi_{\gamma_2}$ {\rm(}resp.\ $-\Psi_{\gamma_2}${\rm )}
 if and only if $\{\gamma_1,\gamma_2\}$ are in the
 same class {\rm(}resp.\ distinct classes{\rm)}.
\end{proposition}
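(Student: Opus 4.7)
The plan is to work in a $g$-coordinate system $(U;u,v)$ at $p$ and show that for any admissible curve $\gamma$ starting at $p$ the $\E$-initial vector $\Psi_\gamma$ equals $\pm\,\psi_v(p)/|\psi_v(p)|$, with the $+$ sign occurring exactly when $\gamma$ is upper and the $-$ sign exactly when $\gamma$ is lower; both directions of the biconditional will then follow immediately, since $\psi_v(p)\neq 0$ forces the two values to be distinct. The structural fact making this possible is that at every singular point $q\in U$ the vanishing eigenvalue $\lambda_1(q)=0$ of $I$ corresponds to $\partial/\partial u$, so
\[
 \left|\psi_u(q)\right|^2 = ds^2(\partial_u,\partial_u)(q) = \lambda_1(q)\,g(\partial_u,\partial_u)(q) = 0.
\]
In particular $\psi_u(p)=0$, $\psi_v(p)\neq 0$, and $\psi_u$ vanishes identically along every singular curve lying in $U$.

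Writing $\gamma(t)=(u(t),v(t))$ with $\gamma(0)=p$, one has
\[
 \psi(\dot\gamma(t)) = \dot u(t)\,\psi_u\bigl(\gamma(t)\bigr) + \dot v(t)\,\psi_v\bigl(\gamma(t)\bigr).
\]
If $\gamma$ is a singular admissible curve (case (2) of Definition~\ref{def:admissible-curve}), then $\psi_u(\gamma(t))\equiv 0$, so $\psi(\dot\gamma)=\dot v\,\psi_v(\gamma)$; Proposition~\ref{thm:class} guarantees $\dot v(t)\neq 0$ for $t>0$ sufficiently small, hence $\sign(\dot v)$ is locally constant on $(0,\varepsilon)$ and
\[
 \Psi_\gamma = \sign(\dot v)\,\frac{\psi_v(p)}{|\psi_v(p)|}.
\]
If $\gamma$ is a non-singular admissible curve (case (1)), then admissibility forces $\dot\gamma(0)$ to be transversal to the null direction $\partial/\partial u$ at $p$, whence $\dot v(0)\neq 0$, and since $\psi_u(p)=0$ the same formula $\Psi_\gamma = \sign(\dot v(0))\,\psi_v(p)/|\psi_v(p)|$ follows directly from the displayed expression evaluated at $t=0$.

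It remains to identify this sign with the upper/lower class. By definition an upper admissible curve satisfies $v(t)>0$ for small $t>0$ with $v(0)=0$; combined with the local constancy of $\sign(\dot v)$ on $(0,\varepsilon)$ established above, that sign must be $+1$, giving $\Psi_\gamma = +\psi_v(p)/|\psi_v(p)|$. The analogous argument for lower admissible curves yields $\Psi_\gamma = -\psi_v(p)/|\psi_v(p)|$, which completes the reduction. The delicate step is the last one applied to a singular curve whose tangent at $p$ is parallel to the $u$-axis (so that $\dot v(0)=0$): no information comes from $\dot v(0)$ itself, and one genuinely needs Proposition~\ref{thm:class} — which prevents singular curves from becoming $u$-parallel away from $p$ — to rule out a sign change of $\dot v$ on $(0,\varepsilon)$ and make the sign well-defined.
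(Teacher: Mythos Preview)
Your proof is correct and follows essentially the same approach as the paper: both work in a $g$-coordinate system at $p$, use that $\psi_u$ vanishes along singular curves (so $\Psi_\gamma=\pm\psi_v(p)/|\psi_v(p)|$), and split into the two cases according to whether $\dot\gamma(0)$ is a null vector, invoking the argument of Proposition~\ref{prop:limit} (and Proposition~\ref{thm:class}) in the null case. Your write-up is simply more explicit than the paper's terse version, in particular in spelling out why the sign of $\dot v$ matches the upper/lower class.
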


\begin{proof}
 Take a $g$-coordinate system $(U:u,v)$ at $p$.
 If $\dot \gamma_j(0)$ is not a null-vector, then
 \[
    \Psi_{\gamma_j}=\sign(\dot v_j(0)) 
    \frac{\psi_p(\partial/\partial v)}{%
                 |\psi_p(\partial/\partial v)|}
    \qquad
    \left(\gamma_j(t)=\bigl(u_j(t),v_j(t)\bigr)\right).
 \]
 On the other hand,
 if $\gamma_j(t)$ is a singular curve with the null vector 
 $\dot \gamma_j(0)$,
 as seen in the proof of Proposition~\ref{prop:limit}, we get
 \[
   \Psi_{\gamma_j}=\left( \lim_{t\to+0} \sign\dot v_j(t) \right)
   \frac{\psi_p(\partial/\partial v)}{%
        |\psi_p(\partial/\partial v)|}.
 \]
 These two formulas for the $\E$-initial vector 
 $\Psi_{\gamma_j}$ prove the assertion.
\end{proof}

\begin{corollary}\label{cor:angle}
 Let $(U;u,v)$ be a $g$-coordinate system at a peak $p$, and
 $\Omega$ a singular sector. 
 Then the interior angle $\alpha_\Omega^{}$ of $\Omega$ 
 {\rm(}defined in \eqref{eq:interior-angle}{\rm )} is given by
 \[
    \alpha_\Omega^{}
        =\begin{cases}
       2\pi & \mbox{
	       \begin{tabular}[t]{l}
		 if $\Omega\cup\{p\}$ contains the closed upper-half
		 or the closed lower-half\\ $uv$-plane near $p$,
               \end{tabular}
	      }\\
       0    & \mbox{
	       \begin{tabular}[t]{l}
                 if $\Omega$ is contained in 
		 the open upper-half
		 or the open lower-half\\ $uv$-plane, 
               \end{tabular}
	      }\\
       \pi  &\mbox{
	      \begin{tabular}{l}
	       otherwise.
	      \end{tabular}
	     }
  \end{cases}
 \]
 See Figure \ref{fig:zero-pi}.
\end{corollary}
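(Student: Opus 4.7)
The plan is to apply Proposition~\ref{prop:g-angle} to reduce each $\arccos$ term in \eqref{eq:interior-angle} to either $0$ or $\pi$, and then to relate the count of class transitions in the interpolation to the geometry of $\Omega$. After fixing a $g$-coordinate system $(U;u,v)$ at $p$ as provided by Proposition~\ref{thm:class} and setting $e:=\psi_p(\partial/\partial v)/|\psi_p(\partial/\partial v)|$, Proposition~\ref{prop:g-angle} gives $\Psi_\gamma=\pm e$ for every admissible curve $\gamma$ starting at $p$, with sign determined by whether $\gamma$ is upper or lower admissible. Hence each inner product $\inner{\Psi_{\gamma_{j-1}}}{\Psi_{\gamma_j}}$ equals $\pm 1$, each term of the interior angle sum equals $0$ (if $\gamma_{j-1}$ and $\gamma_j$ have the same class) or $\pi$ (if they have opposite classes), and $\alpha_\Omega=\pi N$ where $N$ denotes the number of class transitions in the sequence $\gamma_0,\dots,\gamma_n$.

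Next I would identify $N$ geometrically as the number of open $u$-axis rays from $p$ lying in $\Omega$. Condition \eqref{eq:interpol-2} forces the initial tangent directions $\dot\gamma_j(0)$ to rotate counterclockwise through consecutive angles of less than $\pi$, so a class transition occurs exactly when this rotation sweeps across the positive $u$-direction or the negative $u$-direction. By Proposition~\ref{thm:class}, no singular curve meets the $u$-axis away from $p$, so each open $u$-ray is entirely contained in a single sector of $U\setminus\Sigma$; thus the rays swept by the interpolation coincide with those inside $\Omega$. For an isolated peak the cyclic interpolation $\gamma_0,\gamma_1,\gamma_2,\gamma_3=\gamma_0$ necessarily traverses the full circle of directions, so it sweeps both rays automatically.

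Counting these rays then yields the three cases: $N=0$ when $\Omega$ avoids the $u$-axis and hence lies in an open half-plane ($\alpha_\Omega=0$), $N=2$ when both rays are inside $\Omega$ ($\alpha_\Omega=2\pi$), and $N=1$ otherwise ($\alpha_\Omega=\pi$). The main obstacle I foresee is verifying that ``both $u$-rays lie in $\Omega$'' is equivalent to the statement's formulation ``$\Omega\cup\{p\}$ contains a closed upper or closed lower half-plane near $p$''. For the nontrivial direction, if both open $u$-rays belong to the single sector $\Omega$, then the bounding singular curves of $\Omega$ cannot separate the two rays, so they must all lie in one open half-plane; the opposite open half-plane is then free of singular curves, is connected, and is adjacent to both $u$-rays, so it too must lie in $\Omega$. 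Adjoining $p$ together with the two rays then gives the desired closed half-plane.
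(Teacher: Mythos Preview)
The paper does not give an explicit proof of this corollary; it is stated with only a reference to Figure~\ref{fig:zero-pi} and is meant to follow directly from Proposition~\ref{prop:g-angle}. Your argument supplies exactly the details the paper omits, along the same line of reasoning: Proposition~\ref{prop:g-angle} collapses each $\arccos$ term in \eqref{eq:interior-angle} to $0$ or $\pi$, and then the total is governed by how the sector sits relative to the $u$-axis. So your approach is correct and coincides with the paper's intended (but unwritten) argument.

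One small point worth tightening: your sweeping argument via condition~\eqref{eq:interpol-2} literally applies only when $n\ge 2$, and it treats the initial tangent directions $\dot\gamma_j(0)$, whereas for the boundary singular curves $\sigma_0,\sigma_1$ this direction may be the null direction (lying \emph{on} the $u$-axis), in which case ``sweeping across the $u$-direction'' is ambiguous. The clean fix is to argue directly with the half-plane the curve occupies rather than with $\dot\gamma_j(0)$: by Proposition~\ref{thm:class} each $\gamma_j$ lies entirely in the open upper or open lower half-plane for $t>0$, and a class transition between $\gamma_{j-1}$ and $\gamma_j$ occurs precisely when the subsector $\omega_j$ of \eqref{eq:interpol-1} contains an open $u$-ray. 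This handles Case~1 of Remark~\ref{rem:explicit} (where $n=1$) and the null-initial-velocity situation uniformly, and your count $N\in\{0,1,2\}$ then follows without further case analysis.
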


\begin{figure}
 \begin{center}
  \begin{tabular}{c@{\hspace{1cm}}c}
  \includegraphics{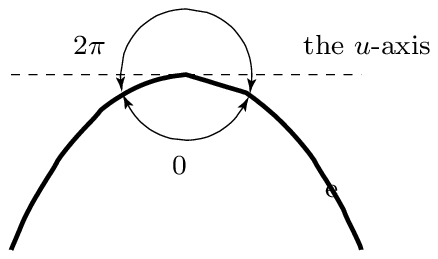} &
  \includegraphics{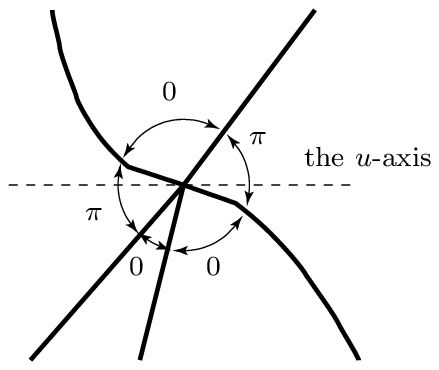} 
  \end{tabular}
 \end{center}
 \caption{Interior angles}
 \label{fig:zero-pi}
\end{figure}
Using this corollary, we can easily compute the angle of each singular
sector at peak:
For example, the singular set near the peak of 
the swallowtail $f_{\SW}$ given
\eqref{eq:cuspidal-swallow} is a parabola in the $uv$-plane. 
So both of the singular curves
starting at the origin are in the same class.
If we take the unit normal vector
$\nu_{\SW}=(1,u,u^2)/\sqrt{1+u^2+u^4}$ of $f_{\SW}$,
then the positive sector is the upper half domain of the parabola
and its interior angle is $2\pi$.

On the other hand, near the peak of the double swallowtail
as in Example~\ref{ex:double-swallow},
the singular set can be taken to be two lines
transversally intersecting at the origin, 
i.e.\ consisting of four rays.
Since the null direction is $\partial/\partial u$ on the singular set,
they are divided into two classes consisting of two rays.
The interior angles of positive sectors are both zero
and the negative sectors are both $\pi$ 
(see Figure~\ref{fig:double-swallow}).

The case of an isolated peak as in Example~\ref{ex:cuspidal-lip},
namely a cuspidal lips,
the neighborhood of the origin is the positive or negative sector,
and the interior angle is $2\pi$.

\begin{proof}[Proof of Theorem~\ref{thm:A}]
 The singular curves starting at the peak divide the neighborhood of $p$ 
 into sectors consisting of subsets of $M_+$ or $M_-$.
 However, by Corollary~\ref{cor:angle},
 there is no contribution of the interior angle of the sector
 unless it contains the $u$-axis to the $g$-coordinate system,
 that is, the only two sectors containing the $u$-axis
 have interior angle $\pi$.
 Thus we have
 \[
    \alpha_++\alpha_-=2\pi
 \]
 and
 \[
   \alpha_-,\alpha_+\in \bigl\{0,\pi,2\pi\bigr\},
 \]
 which proves the assertion.
\end{proof}

\section{A local Gauss-Bonnet formula}
\label{sec:localGB}

In this section, we state a local Gauss-Bonnet type theorem 
for ``admissible'' triangles.
Let $p$ be a peak and fix a $g$-coordinate system $(U;u,v)$ at $p$.
Let $\sigma(t)$ be an admissible curve 
(in the sense of Definition~\ref{def:admissible-curve}).
We define the {\em geometric curvature\/} $\tilde \kappa_g$
of $\sigma$ as follows:
\[
  \tilde \kappa_g(t)=
  \begin{cases}
   \hphantom{-}
   \hat \kappa_g(t) \qquad& (\mbox{if $\sigma(t)\in M_+$}), \\
   -\hat \kappa_g(t)\qquad& (\mbox{if $\sigma(t)\in M_-$}), \\
   \hphantom{-}
   \kappa_s(t) \qquad& (\mbox{if $\sigma(t)\in \Sigma$}).
  \end{cases}
\]
Here, this (geometric) curvature $\tilde \kappa_g$ is 
the geodesic curvature with respect to the orientation of $M^2$
which coincides with the curvature $\hat \kappa_g$  
defined by \eqref{eq:geodesic-curvature} on $M_+$ and is equal to 
$-\hat \kappa_g$ on $M_-$.

\begin{definition}[Admissible triangles]\label{def:admissible-triangle}
 Let $\overline T\subset U$ be the closure of a  simply connected 
 domain $T$ 
 which is bounded by three admissible arcs $\gamma_1$, $\gamma_2$,
 $\gamma_3$. 
 Let $\pt{A}$, $\pt{B}$, and $\pt{C}$ be the distinct three boundary
 points of $T$ which are intersections of these three arcs.
 Then $\overline T$ is called an {\em admissible triangle\/} if
 it satisfies the following three conditions:
\begingroup
\renewcommand{\theenumi}{(\alph{enumi})}
\renewcommand{\labelenumi}{\theenumi}
\begin{enumerate}
 \item\label{item:a:triangle}
         $\overline T$ admits at most one peak on
         $\{\pt{A},\pt{B},\pt{C}\}$.
 \item\label{item:b:triangle}
         The three interior angles 
at $\pt{A}$, $\pt{B}$, and $\pt{C}$ with
         respect to the metric $g$ are all less than $\pi$.
 \item\label{item:c:triangle}
         If $\gamma_j$ ($j=1,2,3$) is not a singular curve,
         it is $C^2$-regular, namely it
         is a restriction of a certain open $C^2$-regular arc.
\end{enumerate}
\endgroup
 We write
 \[
    \triangle \pt{ABC}:=\overline T
 \]
 and call $\{\pt{A},\pt{B},\pt{C}\}$ the {\em vertices\/} of the triangle.
 We also denote by
 \[
   \overset{\frown}{\pt{BC}}:=\gamma_1,\qquad
   \overset{\frown}{\pt{CA}}:=\gamma_2\quad \text{and}\quad
   \overset{\frown}{\pt{AB}}:=\gamma_3
 \]
 the regular arcs whose boundary points are $\{\pt{B},\pt{C}\}$,
 $\{\pt{C},\pt{A}\}$, and $\{\pt{A},\pt{B}\}$, respectively.
 We give here the orientation of these three arcs such that
 the left-hand side is $T$,
 namely the cyclic order $(\pt{A}, \pt{B}, \pt{C})$ is
 compatible with respect to 
 the orientation of $M^2$, see Figure~\ref{fig:admissible}.
\end{definition}

\begin{figure}
 \begin{center}
    \includegraphics{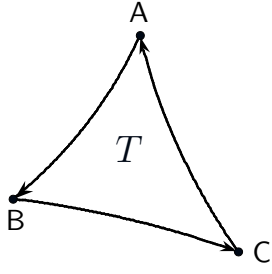}
 \end{center}
\caption{An admissible triangle}
\label{fig:admissible}
\end{figure}

We also denote by 
\[
  \angle\pt{A},\qquad \angle\pt{B},\quad\text{and}\quad \angle\pt{C}
\]
the {\em interior angles\/}
(with respect to the first fundamental form 
 $ds^2=\psi^*\inner{~}{~}$)
of the piecewise smooth boundary of $\triangle\pt{ABC}$ at 
$\pt{A}$, $\pt{B}$, and $\pt{C}$,
respectively unless $\pt{A}$, $\pt{B}$ and $\pt{C}$
are not singular points.
On the other hand, if $\pt{A}$ is a singular point, 
we set
\begin{equation}\label{def:lim-angle}
 \angle \pt{A}:=
  \begin{cases}
   \pi \qquad& 
     \mbox{
       \begin{tabular}[t]{l}
        if the $u$-curve passing through $\pt{A}$ separates\\
        $\overset{\frown}{\pt{AB}}$ and 
        $\overset{\frown}{\pt{AC}}$,
       \end{tabular}}
   \\
   0   \qquad & \mbox{\begin{tabular}{l}otherwise.\end{tabular}}
  \end{cases}
\end{equation}
Similarly we can define $\angle \pt{B}$ (resp. $\angle \pt{C}$)
when $\pt{B}$ (resp. $\pt{C}$) is a singular point.

\begin{remark}
 By Proposition~\ref{prop:g-angle}, $\angle \pt{A}$ coincides with 
 the angle of the $\E$-initial vectors between
 $\overset{\frown}{\pt{AB}}$ and 
 $\overset{\frown}{\pt{AC}}$.
\end{remark}

\begin{theorem}[The local Gauss-Bonnet formula]%
\label{thm:main}
 Let $(\E,\inner{~}{~},D,\psi)$ be a coherent tangent bundle and
 $\triangle\pt{ABC}$ an admissible triangle
 on $M^2$.
 Then the following identity holds{\rm :}
 \begin{equation}\label{eq:locGB0}
  \angle\pt{A}+\angle\pt{B}+\angle\pt{C}-\pi=
   \int_{\partial \triangle\pt{ABC}} \tilde \kappa_g\, d\tau
     +
     \int_{\triangle\pt{ABC}} K\, dA
     +
     2
     \int_{\Sigma\cap (\triangle\pt{ABC})^\circ}\! \kappa_s\, d\tau,
 \end{equation}
 where $\Sigma$ is the singular set,
 $(\triangle\pt{ABC})^\circ$ 
 {\rm(}resp.\  $\partial\triangle\pt{ABC}${\rm)} 
 the interior {\rm(}the boundary{\rm)} of 
 the closed domain $\triangle\pt{ABC}$, 
 and $K$ is the Gaussian curvature of 
 the metric $ds^2=\psi^*\inner{~}{~}$ on  $M^2\setminus \Sigma$.
 In particular, if there are no singular points  in the interior of the
 triangle, it holds that
 \begin{equation}\label{eq:locGB}
  \angle\pt{A}+\angle\pt{B}+\angle\pt{C}-\pi=
   \int_{\partial \triangle\pt{ABC}} \tilde \kappa_g d\tau
   +
   \int_{\triangle\pt{ABC}} K\, dA.
 \end{equation}
\end{theorem}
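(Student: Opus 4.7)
The plan is to decompose $\triangle\pt{ABC}$ along its interior singular set into subregions lying entirely in $M_+$ or $M_-$, apply the classical Gauss--Bonnet formula on each piece, and sum. The factor $2$ in front of $\int_{\Sigma\cap(\triangle\pt{ABC})^{\circ}}\kappa_s\,d\tau$ will appear because each interior $A_2$-curve bounds one positive and one negative subregion, and the left-hand side of \eqref{eq:locGB0} will emerge from the corner contributions once they are carefully identified at the peak vertices via the convention \eqref{def:lim-angle}.

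Because peaks are only allowed at $\pt{A},\pt{B},\pt{C}$, the set $\Sigma\cap(\triangle\pt{ABC})^{\circ}$ consists of finitely many disjoint smooth $A_2$-curves that partition $\triangle\pt{ABC}$ into open subregions $R_1,\dots,R_N$, each contained in $M_+$ or $M_-$. On each $R_i$ the first fundamental form $ds^2$ is a smooth Riemannian metric, and its intrinsic geodesic curvature with respect to the orientation of $M^2$ agrees with $\tilde\kappa_g$ on admissible boundary arcs by the definition of $\tilde\kappa_g$. Applying the classical Gauss--Bonnet theorem to $R_i$---after removing a tubular $\varepsilon$-neighborhood of any singular arcs in $\partial R_i$ and passing to the limit $\varepsilon\to 0$ using Proposition~\ref{prop:bounded-curvature-measure}---yields
\[
  \int_{R_i}K\,dA + \int_{\partial R_i}\tilde\kappa_g\,d\tau + \sum_{v\in\mathrm{Vert}(R_i)}\bigl(\pi-\angle_v^{R_i}\bigr)=2\pi\chi(R_i).
\]
Summing over $i$, the area terms combine to $\int_{\triangle\pt{ABC}}K\,dA$, the $\tilde\kappa_g$-integrals assemble to $\int_{\partial\triangle\pt{ABC}}\tilde\kappa_g\,d\tau+2\int_{\Sigma\cap(\triangle\pt{ABC})^\circ}\kappa_s\,d\tau$ (since interior $A_2$-arcs are traversed once from each side), and there remains a combinatorial sum involving the $\chi(R_i)$ and the corner angles.

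The main obstacle is to show that this combinatorial remainder equals $\angle\pt{A}+\angle\pt{B}+\angle\pt{C}-\pi$. The vertices of the $R_i$ fall into three types: (a)~the vertices $\pt{A},\pt{B},\pt{C}$ of $\triangle\pt{ABC}$; (b)~transverse intersections of interior $A_2$-curves with a regular arc of $\partial\triangle\pt{ABC}$; and (c)~endpoints of interior $A_2$-curves at peak vertices. At each type-(b) point, the two corners on adjacent subregions have angles summing to $\pi$, so the pair contributes $\pi$ to $\sum(\pi-\angle)$. At a peak vertex $\pt{A}$, the $A_2$-curves emanating into $\triangle\pt{ABC}$ subdivide the sector at $\pt{A}$ into subsectors; by Corollary~\ref{cor:angle} each subsector angle, measured via the $\E$-initial vectors, is $0$, $\pi$, or $2\pi$, and their sum equals $\angle\pt{A}$ as defined in \eqref{def:lim-angle}, because exactly one subsector contains the $u$-axis when $\angle\pt{A}=\pi$ and none does when $\angle\pt{A}=0$. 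Combining these facts with Euler's formula for the cell decomposition of $\triangle\pt{ABC}$ induced by $\Sigma$, all auxiliary contributions telescope, leaving $\angle\pt{A}+\angle\pt{B}+\angle\pt{C}-\pi$ and proving \eqref{eq:locGB0}. The special case \eqref{eq:locGB} (no singular points in the interior) is then immediate.
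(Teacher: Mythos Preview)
Your overall strategy—cut along the interior singular set, apply Gauss--Bonnet to each piece, and sum—is sound, and indeed the paper itself uses this kind of decomposition (see Lemma~\ref{lem5} and the proof of Theorem~\ref{thm:B}). But the paper does \emph{not} prove the theorem this way in one stroke; it first establishes, through the chain of Lemmas~\ref{lem1}--\ref{lem6}, that the formula \eqref{eq:locGB} holds for triangles whose boundary is partially singular. Your proposal skips exactly this step, and that is a genuine gap.

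Concretely, you write that you will apply classical Gauss--Bonnet to each $R_i$ ``after removing a tubular $\varepsilon$-neighborhood of any singular arcs in $\partial R_i$ and passing to the limit $\varepsilon\to 0$ using Proposition~\ref{prop:bounded-curvature-measure}.'' Proposition~\ref{prop:bounded-curvature-measure} does not do this job: it says only that $\kappa_s\,d\tau$ is continuous \emph{along the singular curve itself}. It says nothing about the geodesic curvature of the nearby $\varepsilon$-parallel curves with respect to the degenerating metric $ds^2$, nor about the behavior of the corner angles of the truncated region as $\varepsilon\to 0$. Both of these must be controlled to pass to the limit, and both are delicate because $ds^2$ collapses in the null direction. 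The paper handles this not by tubular neighborhoods but by a different limiting device: for each successive configuration (one singular vertex, one singular edge, a peak with a singular edge, \dots) it constructs an explicit one-parameter family $\gamma_\varepsilon$ of $C^2$-arcs pivoting about a fixed regular vertex, so that the approximating triangle $\triangle\pt{A}_\varepsilon\pt{BC}$ always satisfies an already-proven case, and then identifies the limiting corner angle via Proposition~\ref{prop:g-angle}. The final case—two singular edges meeting at a peak with the same null-velocity—cannot be reached by such a family directly and is handled by an extension-and-subtraction trick (enlarging the triangle to $\triangle\pt{ABC}'$ and subtracting $\triangle\pt{ACC}'$).

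A second, smaller gap: your claim that the corner contributions ``telescope'' via Euler's formula is plausible but not demonstrated. In particular, at a peak vertex you invoke Corollary~\ref{cor:angle}, but that corollary is stated for singular sectors bounded by singular curves, whereas your subsectors may be bounded by admissible boundary arcs; you need Proposition~\ref{prop:g-angle} to see that the same $0/\pi$ dichotomy applies. None of this is false, but it is precisely the content of the paper's lemmas that you are invoking without proof.
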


To prove Theorem~\ref{thm:main},
we prepare several lemmas as follows:
\begin{lemma}\label{lem1}
 Suppose that $\triangle\pt{ABC}$ is contained in $M_+$ or $M_-$.
 Then \eqref{eq:locGB} holds.
\end{lemma}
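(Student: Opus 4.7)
The plan is to reduce \eqref{eq:locGB} to the classical Gauss--Bonnet formula for the Riemannian triangle $(\triangle\pt{ABC},ds^2)$, where $ds^2=\psi^*\inner{~}{~}$. Since $\triangle\pt{ABC}$ is contained in $M_+$ or $M_-$, it avoids the singular set $\Sigma$ entirely, so $\psi$ restricts to a bundle isomorphism over a neighborhood of the triangle and $ds^2$ is a genuine Riemannian metric there. In particular, the three vertices $\pt{A},\pt{B},\pt{C}$ are ordinary regular points and $\angle\pt{A},\angle\pt{B},\angle\pt{C}$ are their usual Riemannian interior angles.

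Under this isomorphism I would transport $(\inner{~}{~},D)$ from $\E$ to $TM^2$. Metric compatibility of $D$ transfers automatically, and the bundle condition \eqref{eq:c} becomes precisely the torsion-free condition, so the transported connection must coincide with the Levi--Civita connection of $ds^2$. Next I would compare $\tilde\kappa_g$ with the classical geodesic curvature $\kappa_g$ of $\partial\triangle\pt{ABC}$ defined via $ds^2$ and the orientation of $M^2$. On $M_+$, the co-orientation of $\E$ agrees with that of $M^2$, so the $\E$-conormal $n$ coincides with the inward normal of the boundary; hence $\hat\kappa_g=\kappa_g$ and therefore $\tilde\kappa_g=\kappa_g$. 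On $M_-$, the two orientations disagree, flipping $n$ and giving $\hat\kappa_g=-\kappa_g$; but by definition $\tilde\kappa_g=-\hat\kappa_g$ on $M_-$, so again $\tilde\kappa_g=\kappa_g$. Meanwhile, $K$ is already by definition the Gaussian curvature of $ds^2$, and $dA$ the unsigned area element of $ds^2$, so the integrands on the right-hand side of \eqref{eq:locGB} are purely Riemannian objects.

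With these identifications in hand, the right-hand side of \eqref{eq:locGB} is exactly
\[
 \int_{\triangle\pt{ABC}} K\,dA + \int_{\partial\triangle\pt{ABC}}\kappa_g\,d\tau,
\]
and the classical Gauss--Bonnet theorem for a piecewise $C^2$ Riemannian triangle then delivers \eqref{eq:locGB}. The regularity needed for this classical result is guaranteed by condition (c) of Definition~\ref{def:admissible-triangle}, since the assumption $\triangle\pt{ABC}\subset M_\pm$ forces the boundary arcs to lie outside $\Sigma$ and hence to be $C^2$-regular. The main technical obstacle is the careful sign bookkeeping on $M_-$: $\hat\kappa_g$ is defined using the co-orientation of $\E$, whereas the classical Gauss--Bonnet formula uses the orientation of $M^2$, and $M_-$ is precisely where these two conventions disagree. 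Once the two sign flips (one in $n$, one built into the definition of $\tilde\kappa_g$) are seen to cancel, the lemma reduces entirely to classical Riemannian geometry.
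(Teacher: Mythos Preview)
Your proposal is correct and takes essentially the same approach as the paper: both reduce the statement to the classical Gauss--Bonnet formula for the Riemannian metric $ds^2$ on $M^2\setminus\Sigma$. The paper's proof is a single sentence to this effect, whereas you spell out the identification of $D$ with the Levi--Civita connection via \eqref{eq:c} and the sign bookkeeping showing $\tilde\kappa_g$ equals the classical geodesic curvature on both $M_+$ and $M_-$; these details are implicit in the paper's remark just before Definition~\ref{def:admissible-triangle} that $\tilde\kappa_g$ is ``the geodesic curvature with respect to the orientation of $M^2$.''
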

\begin{proof}
 The lemma is exactly the classical Gauss-Bonnet formula
 with respect to the Riemannian metric $ds^2$
 on $M^2\setminus \Sigma$.
\end{proof}
\begin{figure}
\begin{center}
\footnotesize
\begin{tabular}{c@{\hspace{10mm}}c}
        \includegraphics{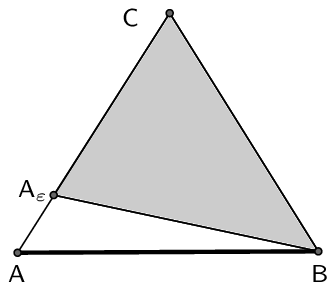} &
        \includegraphics{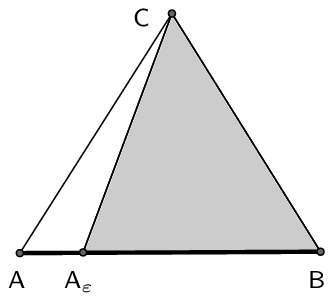} 
 \\
 Proof of Lemmas~\ref{lem2} and \ref{lem3} &
 Proof of Lemma~\ref{lem4}
\end{tabular}
\end{center}
\caption{%
 Proofs of Lemmas~\ref{lem2}, \ref{lem3} and \ref{lem4}.
}
\label{fig:proofs1}
\end{figure}
    
\begin{lemma}\label{lem2}
 Let $\triangle\pt{ABC}$ be an admissible triangle 
 such that $\pt{A}$ is a $A_2$-point or a peak, and 
 $\triangle\pt{ABC}\setminus \{\pt{A}\}$ 
 lies  in $M_+$  {\rm (}resp.\ $M_-${\rm)}.
 Suppose that $\overset{\frown}{\pt{AB}}$
 and $\overset{\frown}{\pt{BC}}$ are transversal
 at $\pt{B}$.  Then \eqref{eq:locGB} holds.
\end{lemma}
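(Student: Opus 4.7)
The plan is to cut off a small neighborhood of the singular vertex $\pt{A}$ from $\triangle\pt{ABC}$, apply Lemma~\ref{lem1} (after a diagonal subdivision) to the resulting region, which lies entirely in $M_+$ (resp.\ $M_-$), and pass to the limit.

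Fix a $g$-coordinate system $(U;u,v)$ centered at $\pt{A}$, and parameterize the two admissible arcs $\overset{\frown}{\pt{AB}}$, $\overset{\frown}{\pt{CA}}$ (regarded as leaving $\pt{A}$) by $\sigma_1(t), \sigma_2(t)$ with $\sigma_j(0)=\pt{A}$; by admissibility at the singular point and Proposition~\ref{thm:class}, one has $\dot v_j(0)\neq 0$. After reparameterizing each $\sigma_j$ so that $v_j(t)=\pm t$, pick small $\epsilon_1,\epsilon_2>0$ with $v_1(\epsilon_1)\neq v_2(\epsilon_2)$, set $\pt{A}_j^\epsilon:=\sigma_j(\epsilon_j)$, and join $\pt{A}_2^\epsilon$ to $\pt{A}_1^\epsilon$ by a short arc $\gamma_\epsilon$ on which the $v$-coordinate is monotone. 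The region $T_\epsilon$ bounded by $\gamma_\epsilon$ together with the subarcs from $\pt{A}_1^\epsilon$ to $\pt{B}$, $\overset{\frown}{\pt{BC}}$, and from $\pt{C}$ to $\pt{A}_2^\epsilon$ is, for $\epsilon_1,\epsilon_2$ small, a topological disk contained in $M_+$ (resp.\ $M_-$). Subdividing $T_\epsilon$ into two admissible triangles by a $g$-geodesic diagonal and summing the identities from Lemma~\ref{lem1} (the diagonal contributions cancel) yields
\[
  \theta(\pt{A}_1^\epsilon)+\angle\pt{B}+\angle\pt{C}+\theta(\pt{A}_2^\epsilon)-2\pi
  =\int_{T_\epsilon}K\,dA+\int_{\partial T_\epsilon}\tilde\kappa_g\,d\tau,
\]
where $\theta(\pt{A}_j^\epsilon)$ is the interior angle of $T_\epsilon$ at $\pt{A}_j^\epsilon$, measured in the non-degenerate $ds^2$.

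Letting $\epsilon_1,\epsilon_2\to 0$, the bulk integral $\int_{T_\epsilon}K\,dA$ converges to $\int_T K\,dA$ by dominated convergence, and the boundary integrals of $\tilde\kappa_g$ over the shortening arc-pieces tend to $\int_{\partial\triangle\pt{ABC}}\tilde\kappa_g\,d\tau$. It therefore remains to establish
\[
  \lim\bigl[\theta(\pt{A}_1^\epsilon)+\theta(\pt{A}_2^\epsilon)\bigr]=\pi+\angle\pt{A},\qquad
  \lim\int_{\gamma_\epsilon}\tilde\kappa_g\,d\tau=0.
\]

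Both limits exploit the fact that in a $g$-coordinate system $\psi_u(\pt{A})=0$ while $\psi_v(\pt{A})\neq 0$, so $e_1:=\psi_v/|\psi_v|$ extends to a smooth unit section of $\E$ on a neighborhood of $\pt{A}$, producing a smooth orthonormal frame $\{e_1,e_2\}$. Because $\gamma_\epsilon$ was chosen with monotone $v$, the $\E$-unit tangents of $\gamma_\epsilon$ and of the $\sigma_j$ at $\pt{A}_j^\epsilon$ all differ from $\pm e_1$ by $O(\epsilon)$ (the $\psi_u$-component is subleading). A short case analysis --- Case 1: $\dot v_1(0)$ and $\dot v_2(0)$ have opposite signs, $\angle\pt{A}=\pi$; Case 2: same sign, $\angle\pt{A}=0$ --- then verifies $\theta(\pt{A}_1^\epsilon)+\theta(\pt{A}_2^\epsilon)\to\pi+\angle\pt{A}$ (interior angles being measured through the interior of $T_\epsilon$). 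For the curvature integral, write $\hat\kappa_g\,d\tau = d\phi-\omega$ along $\gamma_\epsilon$, where $\phi$ is the angle of the unit tangent in $\{e_1,e_2\}$ and $\omega$ is the connection form: $\phi(\pt{A}_1^\epsilon)-\phi(\pt{A}_2^\epsilon)\to 0$ since the tangent stays within $O(\epsilon)$ of $\pm e_1$, and $\int_{\gamma_\epsilon}\omega\to 0$ since $\omega$ is smooth near $\pt{A}$ while $|\gamma_\epsilon|\to 0$. The main obstacle is this case analysis for the interior angles: one must track how the orientation of $M^2$, the $g$-coordinate system, and the co-orientation $\mu$ on $\E$ interact with the signs of $\dot v_j(0)$ and of $v_1(\epsilon_1)-v_2(\epsilon_2)$ to confirm that the sum equals $\pi+\angle\pt{A}$ in both $M_+$ and $M_-$ sub-cases.
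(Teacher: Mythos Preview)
Your approach is sound but genuinely different from the paper's, and the part you flag as the ``main obstacle'' is precisely what the paper's argument sidesteps.  The paper does not truncate $\triangle\pt{ABC}$ to a quadrilateral.  Instead it extends the $C^2$-arc $\overset{\frown}{\pt{AB}}$ slightly past $\pt{A}$ and rotates the extended arc about $\pt{B}$ in the flat metric $du^2+dv^2$, producing a one-parameter family $\gamma_\varepsilon$ with $\gamma_\varepsilon(0)=\pt{B}$ and $\gamma_\varepsilon(1)=\pt{A}_\varepsilon\in\overset{\frown}{\pt{AC}}$ (this is where the transversality hypothesis at $\pt{B}$ is used).  For $\varepsilon>0$ the region $\triangle\pt{A}_\varepsilon\pt{BC}$ is an honest triangle contained in $M_+$, so Lemma~\ref{lem1} applies directly with no subdivision.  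Only one limit must then be computed, $\lim_{\varepsilon\to 0}\angle\pt{A}_\varepsilon$, and this falls out immediately from Proposition~\ref{prop:g-angle} (the $\E$-initial vectors of $\gamma_0=\overset{\frown}{\pt{AB}}$ and of $\sigma\colon\varepsilon\mapsto\pt{A}_\varepsilon\subset\overset{\frown}{\pt{AC}}$ are $\pm\psi_v/|\psi_v|$ according to whether the $u$-curve separates the two edges).

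Your quadrilateral cut works in principle, but it trades one clean angle limit for three separate limiting statements: two interior angles $\theta(\pt{A}_j^\epsilon)$ and the boundary integral over $\gamma_\epsilon$.  The $\int_{\gamma_\epsilon}\tilde\kappa_g\,d\tau\to 0$ argument via $d\phi-\omega$ is fine.  The angle sum $\theta(\pt{A}_1^\epsilon)+\theta(\pt{A}_2^\epsilon)\to\pi+\angle\pt{A}$ also checks out case by case, but you have not actually carried the cases through; you only assert that ``one must track'' the orientations.  In Case~2, for instance, one of the two limiting angles is $0$ and the other is $\pi$, and which is which depends on the sign of $v_1(\epsilon_1)-v_2(\epsilon_2)$, so the verification genuinely requires the bookkeeping you defer.  (A minor point: the conclusion $\dot v_j(0)\neq 0$ follows directly from clause~(1) of Definition~\ref{def:admissible-curve} applied at the singular endpoint, not from Proposition~\ref{thm:class}, whose proof allows $\dot v(0)=0$ for singular curves.)  The paper's single-vertex rotation avoids all of this: no auxiliary arc, no extra curvature integral, no two-angle case split.
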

\begin{proof}
 Without loss of generality, we may assume that 
 $\triangle\pt{ABC}\setminus \{\pt{A}\}$ lies in $M_+$.
 We can take a short extension of 
 the $C^2$-regular arc $\overset{\frown}{\pt{AB}}$ beyond $\pt{A}$, and 
 rotate it around $\pt{B}$
 with respect to the canonical metric $du^2+dv^2$
 on the $uv$-plane. 
 Then we get 
 a smooth $1$-parameter family of $C^2$-regular arcs staring at $\pt{B}$. 
 Since $\overset{\frown}{\pt{AB}}$ and 
 $\overset{\frown}{\pt{BC}}$ are transversal at $\pt{B}$,
 restricting the image of this family 
 on the triangle $\triangle\pt{ABC}$,
 we get a family of $C^2$-regular curves
 \[
    \gamma_\varepsilon:[0,1]\longrightarrow \triangle\pt{ABC}
        \qquad (\varepsilon\in [0,1])
 \]
 such that (see Figure~\ref{fig:proofs1}, left)
 \begingroup
 \renewcommand{\theenumi}{(\roman{enumi})}
 \begin{enumerate}
  \item $\gamma_0$ parametrizes $\overset{\frown}{\pt{AB}}$
    such that $\gamma_0(1)=\pt{A}$ and $\gamma_0(0)=\pt{B}$,
  \item $\gamma_\varepsilon(0)=\pt{B}$ for all $\varepsilon\in [0,1]$,
  \item the correspondence 
    $\sigma:\varepsilon\mapsto\gamma_\varepsilon(1)$
    gives a subarc on $\overset{\frown}{\pt{AC}}$.
    We set $\pt{A}_\varepsilon=\gamma_\varepsilon(1)$, 
    where $\pt{A}_0=\pt{A}$.
 \end{enumerate}
 \endgroup
 Since 
 $\triangle\pt{A}_\varepsilon\pt{BC}$ ($\varepsilon>0$)
 lies in $M_+$, it is an admissible triangle.
 So, applying Lemma~\ref{lem1} for $\triangle\pt{A}_\varepsilon\pt{BC}$, 
 we have
 \[
    \angle\pt{A}_\varepsilon+
    \angle\pt{A}_\varepsilon\pt{BC}+\angle\pt{C}-\pi=
    \int_{\partial \triangle \pt{A}_\varepsilon\pt{BC}} 
      \tilde\kappa_g\, d\tau + 
    \int_{\triangle\pt{A}_\varepsilon\pt{BC}} K\, dA.
 \]
 By taking the limit as $\varepsilon\to 0$, we have that
 \[
   \lim_{\varepsilon\to +0}\angle\pt{A}_\varepsilon 
     +\angle\pt{B}+\angle\pt{C} -\pi=
   \int_{\partial \triangle\pt{ABC}} \tilde\kappa_g\, d\tau + 
    \int_{\triangle\pt{ABC}} K\, dA.
 \]
 Note that since $\triangle\pt{ABC}$ is admissible, 
 $\hat\kappa_g$ is bounded on both of $\overset{\frown}{\pt{AB}}$ and 
 $\overset{\frown}{\pt{AC}}$.
 On the other hand, by Proposition~\ref{prop:g-angle}
 we have
 \begin{align*}
  \lim_{\varepsilon\to +0}\angle\pt{A}_\varepsilon&=
  \lim_{\varepsilon\to +0}
     \arccos
        \left[
        \left.
         \inner{%
            \dot{\hat\gamma}_{\varepsilon}(0)
          }{%
            \psi\left(\frac{d\gamma_{s}(0)}{ds}\right)
          }\right/
         \left(
         \left|
            \dot{\hat\gamma}_{\varepsilon}(0)
         \right|
         \left|
            \psi\left(\frac{d\gamma_{s}(0)}{ds}\right)
         \right|\right)
        \right]_{s=\varepsilon}
    \\
    &=\arccos\inner{\Psi_{\gamma_0}}{\Psi_{\sigma}}
    =
  \begin{cases}
   \pi \quad&  \mbox{%
                \begin{tabular}[t]{l}
		 if the $u$-curve passing through $\pt{A}$\\
		 separates
		 $\overset{\frown}{\pt{AB}}$
                 and $\overset{\frown}{\pt{AC}}$,
		\end{tabular}}
   \\
   0   \quad&  \mbox{\begin{tabular}{l}otherwise,\end{tabular}}
  \end{cases}
 \end{align*}
 where $\dot{\hat\gamma}_{\varepsilon}(t)=
  \psi\bigl(d\gamma_\varepsilon(t)/dt\bigr)$.
 This completes the proof.
\end{proof}

\begin{lemma}\label{lem3}
 Let $\triangle\pt{ABC}$ be an admissible triangle such 
 that $\overset{\frown}{\pt{AB}}$ consists of  $A_2$-points, and
 $\triangle\pt{ABC} \setminus \overset{\frown}{\pt{AB}}$
 lies  in $M_+$  {\rm(}resp.\ in $M_-${\rm )}.
 Suppose that $\overset{\frown}{\pt{AB}}$
 and $\overset{\frown}{\pt{BC}}$ are transversal
 at $\pt{B}$. 
 Then \eqref{eq:locGB} holds.
\end{lemma}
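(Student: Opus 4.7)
The plan is to reduce Lemma~\ref{lem3} to Lemma~\ref{lem1} via a deformation argument: push the singular edge $\overset{\frown}{\pt{AB}}$ slightly into the regular region $M_+$ (WLOG), apply the classical Gauss--Bonnet formula to the resulting non-singular triangles, and take a limit.

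More precisely, choose a coordinate chart $(u,v)$ on a tubular neighborhood of $\overset{\frown}{\pt{AB}}$ with $\overset{\frown}{\pt{AB}}\subset\{u=0\}$, $M_+$ corresponding to $\{u>0\}$, and $\partial/\partial u$ the null direction along the interior $A_2$-points of $\overset{\frown}{\pt{AB}}$ (so $\psi_u=0$ on $\{u=0\}$ off the peak endpoints). For small $\varepsilon>0$, let $\gamma_\varepsilon$ be the portion of $\{u=\varepsilon\}$ cut out by $\overset{\frown}{\pt{AC}}$ and $\overset{\frown}{\pt{CB}}$, meeting them at $\pt{A}_\varepsilon\to\pt{A}$ and $\pt{B}_\varepsilon\to\pt{B}$; transversality of $\overset{\frown}{\pt{AC}}$ and $\overset{\frown}{\pt{CB}}$ to the singular direction (guaranteed by admissibility) ensures these intersections exist uniquely for small $\varepsilon$. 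The triangle $\triangle_\varepsilon := \triangle\pt{A}_\varepsilon\pt{B}_\varepsilon\pt{C}\subset M_+$ has no singular points, so Lemma~\ref{lem1} gives
\[
\angle\pt{A}_\varepsilon + \angle\pt{B}_\varepsilon + \angle\pt{C} - \pi
 = \int_{\partial\triangle_\varepsilon}\tilde\kappa_g\,d\tau
 + \int_{\triangle_\varepsilon}K\,dA,
\]
where $\angle\pt{C}_\varepsilon = \angle\pt{C}$ because the two edges at $\pt{C}$ remain subarcs of the original ones.

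Taking $\varepsilon\to 0^+$ then produces \eqref{eq:locGB}, subject to the following convergences. The integrals $\int_{\triangle_\varepsilon}K\,dA$ and the portions of $\int_{\partial\triangle_\varepsilon}\tilde\kappa_g\,d\tau$ lying on the subarcs of $\overset{\frown}{\pt{CA}}$ and $\overset{\frown}{\pt{CB}}$ converge by continuity, since no singular points lie in the interior. Along $\gamma_\varepsilon$ one has $\dot\gamma_\varepsilon = \partial/\partial v$, hence $\hat\kappa_g\,d\tau = \mu(\psi_v, D_v\psi_v)/|\psi_v|^2\,dv$, an expression that extends continuously across $\{u=0\}$; the sign factor $\sign(d\lambda(\eta))$ appearing in \eqref{eq:sing-curve} equals $+1$ since $\lambda>0$ on $M_+=\{u>0\}$ and $\eta=\partial/\partial u$, so $\int_{\gamma_\varepsilon}\hat\kappa_g\,d\tau \to \int_{\overset{\frown}{\pt{AB}}}\kappa_s\,d\tau$ (boundedness being supplied by Proposition~\ref{prop:bounded-curvature-measure}). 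For the vertex angles, the $\E$-initial vector of $\gamma_\varepsilon$ at $\pt{A}_\varepsilon$ tends to $\psi_v(\pt{A})/|\psi_v(\pt{A})|$, which matches the $\E$-initial vector of $\overset{\frown}{\pt{AB}}$ at $\pt{A}$ since $\psi_v\neq 0$ along $\overset{\frown}{\pt{AB}}$; combined with continuity of the tangent to $\overset{\frown}{\pt{A_\varepsilon C}}$ at $\pt{A}_\varepsilon$, this gives $\angle\pt{A}_\varepsilon\to\angle\pt{A}$ via Proposition~\ref{prop:g-angle} and, in the peak case, the definition \eqref{def:lim-angle}. The analogous convergence holds at $\pt{B}$.

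The main obstacle is the case in which $\pt{A}$ or $\pt{B}$ is a peak whose null direction agrees with the tangent of $\overset{\frown}{\pt{AB}}$ there: the coordinate chart above breaks down at that peak, and the $\E$-initial vector of $\overset{\frown}{\pt{AB}}$ must instead be computed through the limiting procedure of Proposition~\ref{prop:limit}. Reconciling this with the limit of the $\E$-initial vectors of $\gamma_\varepsilon$ at $\pt{A}_\varepsilon$, and then matching the resulting angle to the value prescribed by \eqref{def:lim-angle}, requires tracking the sign of the $v$-component of the tangent to $\overset{\frown}{\pt{AC}}$ near the peak; the remainder of the argument is a routine continuity calculation analogous to that in the proof of Lemma~\ref{lem2}.
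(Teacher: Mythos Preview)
Your argument is correct, but it takes a different route from the paper's. The paper keeps the vertex $\pt{B}$ fixed and rotates the arc $\overset{\frown}{\pt{AB}}$ about $\pt{B}$ (using the assumed transversality of $\overset{\frown}{\pt{AB}}$ and $\overset{\frown}{\pt{BC}}$ there), producing triangles $\triangle\pt{A}_\varepsilon\pt{BC}$ that still carry the singular vertex $\pt{B}$; it then invokes Lemma~\ref{lem2} rather than Lemma~\ref{lem1}, so only one vertex limit (at $\pt{A}$) has to be analyzed. Your parallel-translation scheme moves both endpoints at once and lands on triangles lying entirely in $M_+$, so you can appeal directly to Lemma~\ref{lem1}; the cost is that you must control two vertex limits and verify $\int_{\gamma_\varepsilon}\hat\kappa_g\,d\tau\to\int_{\overset{\frown}{\pt{AB}}}\kappa_s\,d\tau$ by hand (your computation $\hat\kappa_g\,d\tau=\mu(\psi_v,D_v\psi_v)|\psi_v|^{-2}\,dv$ does this correctly). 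Both approaches work; yours is more self-contained, while the paper's is deliberately incremental, each lemma adding exactly one singular feature and deferring to its predecessor.

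One clarification: your final paragraph about a peak at $\pt{A}$ or $\pt{B}$ is unnecessary here. The hypothesis says $\overset{\frown}{\pt{AB}}$ consists of $A_2$-points, and by Definition~\ref{def:peak} a peak is never an $A_2$-point; moreover $\triangle\pt{ABC}\setminus\overset{\frown}{\pt{AB}}\subset M_+$ forces $\pt{C}$ to be regular. So no vertex of the triangle is a peak in Lemma~\ref{lem3}, your tubular coordinate chart is available along the whole arc, and the ``main obstacle'' you describe does not arise---that difficulty is precisely the content of Lemma~\ref{lem4}.
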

\begin{proof}
 Without loss of generality, we may assume that
 $\triangle\pt{ABC}\setminus \overset{\frown}{\pt{AB}}$ lies in $M_+$.
 Since $\pt{A}$ and $\pt{B}$ are $A_2$-points,
 by the same method as in Lemma \ref{lem2}
there is a family of $C^2$-regular curves
 \[
   \gamma_\varepsilon:[0,1]\longrightarrow
                 \triangle\pt{ABC}\qquad (\varepsilon\in [0,1])
 \]
 such that
 \begingroup
 \renewcommand{\theenumi}{(\roman{enumi})}
 \begin{enumerate}
  \item $\gamma_0$ parametrizes $\overset{\frown}{\pt{AB}}$
    such that $\gamma_0(1)=\pt{A}$ and $\gamma_0(0)=\pt{B}$,
  \item $\gamma_\varepsilon(0)=\pt{B}$ for all $\varepsilon\in [0,1]$,
  \item the correspondence $\varepsilon\mapsto \gamma_\varepsilon(1)$ gives 
    a subarc on $\overset{\frown}{\pt{AC}}$.
    We set $\pt{A}_\varepsilon=\gamma_\varepsilon(1)$, 
    where $\pt{A}_0=\pt{A}$.
 \end{enumerate}
 \endgroup
 Since 
 $\triangle\pt{A}_\varepsilon\pt{BC}$ ($\varepsilon>0$)
 lies in $M_+$, it is an admissible triangle.
 So, applying Lemma~\ref{lem2} for $\triangle \pt{A}_\varepsilon \pt{BC}$, 
 we have
 \[
    \angle\pt{A}_\varepsilon+\angle\pt{A}_\varepsilon\pt{BC}
       +\angle\pt{C}-\pi=
    \int_{\partial \triangle\pt{A}_\varepsilon\pt{BC}} 
     \tilde\kappa_g \,d\tau + 
     \int_{\triangle\pt{A}_\varepsilon\pt{BC}} K\, dA.
 \]
 By taking the limit as $\varepsilon\to 0$, we have 
 the assertion just by the same argument as in the proof of the previous
 lemma.
\end{proof}

\begin{lemma}\label{lem4}
 Let $\pt{A}$ be a peak of an admissible triangle $\triangle\pt{ABC}$ 
 such that $\overset{\frown}{\pt{AB}}$ is a singular curve 
 starting from  $\pt{A}$,
 and $\triangle\pt{ABC}\setminus \overset{\frown}{\pt{AB}}$
 is contained completely in $M_+$ or in $M_-$.
Suppose that $\overset{\frown}{\pt{BC}}$
and $\overset{\frown}{\pt{CA}}$ are transversal
at $\pt{C}$. 
Then \eqref{eq:locGB} holds.
\end{lemma}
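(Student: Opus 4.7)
The strategy mirrors the proofs of Lemmas~\ref{lem2} and \ref{lem3}: approximate the admissible triangle with peak $\pt{A}$ by triangles whose vertex near $\pt{A}$ is an $A_2$-point lying on the singular edge $\overset{\frown}{\pt{AB}}$, invoke Lemma~\ref{lem3}, and take a limit. The difference from Lemma~\ref{lem3} is that the peak $\pt{A}$ now sits at the end of the singular arc, so we must foliate from the opposite, transversal vertex $\pt{C}$ rather than from $\pt{B}$.

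The plan is as follows. First, using the transversality of $\overset{\frown}{\pt{BC}}$ and $\overset{\frown}{\pt{CA}}$ at $\pt{C}$ together with the $C^2$-regularity of $\overset{\frown}{\pt{CA}}$, construct a family $\{\gamma_\varepsilon\}_{\varepsilon\in[0,1]}$ of $C^2$-regular arcs emanating from $\pt{C}$, obtained by rotating a $C^2$-extension of $\overset{\frown}{\pt{CA}}$ about $\pt{C}$ in a coordinate chart, such that $\gamma_0$ parametrizes $\overset{\frown}{\pt{CA}}$ (with $\gamma_0(0)=\pt{C}$, $\gamma_0(1)=\pt{A}$) and, for $\varepsilon>0$ small, $\pt{A}_\varepsilon:=\gamma_\varepsilon(1)$ lies on the subarc of $\overset{\frown}{\pt{AB}}$ near $\pt{A}$. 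Since $\pt{A}$ is the only peak in $\triangle\pt{ABC}$, each $\pt{A}_\varepsilon$ is an $A_2$-point.

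Second, for each such $\varepsilon>0$ the triangle $\triangle\pt{A}_\varepsilon\pt{BC}$ (bounded by $\overset{\frown}{\pt{A}_\varepsilon\pt{B}}\subset\overset{\frown}{\pt{AB}}$, $\overset{\frown}{\pt{BC}}$, and $\gamma_\varepsilon$) is admissible and satisfies the hypotheses of Lemma~\ref{lem3}: the edge $\overset{\frown}{\pt{A}_\varepsilon\pt{B}}$ consists of $A_2$-points, the rest lies in one of $M_\pm$, and transversality of $\overset{\frown}{\pt{A}_\varepsilon\pt{B}}$ and $\overset{\frown}{\pt{BC}}$ at $\pt{B}$ is inherited from $\triangle\pt{ABC}$. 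Applying Lemma~\ref{lem3} yields
\begin{equation*}
  \angle\pt{A}_\varepsilon+\angle\pt{A}_\varepsilon\pt{BC}+\angle\pt{A}_\varepsilon\pt{CB}-\pi
  =\int_{\partial\triangle\pt{A}_\varepsilon\pt{BC}}\tilde\kappa_g\,d\tau
  +\int_{\triangle\pt{A}_\varepsilon\pt{BC}}K\,dA.
\end{equation*}

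Third, pass to the limit $\varepsilon\to 0^+$. The angle $\angle\pt{A}_\varepsilon\pt{BC}$ tends to $\angle\pt{B}$ trivially, and $\angle\pt{A}_\varepsilon\pt{CB}\to\angle\pt{C}$ by the $C^1$-continuity of $\gamma_\varepsilon$. The angle $\angle\pt{A}_\varepsilon$ is defined by \eqref{def:lim-angle} (as $\pt{A}_\varepsilon$ is singular), and exactly as in the final computation in the proof of Lemma~\ref{lem2}, Proposition~\ref{prop:g-angle} shows that the relevant $\E$-initial vectors at $\pt{A}_\varepsilon$ converge to $\Psi_{\overset{\frown}{\pt{AB}}}$ and $\Psi_{\overset{\frown}{\pt{AC}}}$ respectively; the $u$-curve separation criterion from \eqref{def:lim-angle} is therefore stable, giving $\angle\pt{A}_\varepsilon\to\angle\pt{A}$. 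For the boundary integral, the contribution on $\overset{\frown}{\pt{BC}}$ is fixed; on $\gamma_\varepsilon$ it converges to the contribution on $\overset{\frown}{\pt{CA}}$ by $C^2$-continuity of the family and boundedness of $\hat\kappa_g$ on the admissible arc $\overset{\frown}{\pt{CA}}$; the contribution on $\overset{\frown}{\pt{A}_\varepsilon\pt{B}}$ converges to that on $\overset{\frown}{\pt{AB}}$ by Proposition~\ref{prop:bounded-curvature-measure}, which ensures that $\kappa_s\,d\tau$ extends continuously across the peak. Finally, since on $M_\pm$ one has $K\,dA=\pm K\,d\hat A=\pm d\omega$, the interior integrals can be rewritten via Stokes as integrals of the smooth form $\omega$, whose convergence along the exhausting sequence $\triangle\pt{A}_\varepsilon\pt{BC}\nearrow\triangle\pt{ABC}$ is immediate.

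The main obstacle is the third step, specifically the convergence of $\int\kappa_s\,d\tau$ along $\overset{\frown}{\pt{A}_\varepsilon\pt{B}}$: even though $\kappa_s$ itself may not extend across the peak $\pt{A}$ (the arclength parameter degenerates there), Proposition~\ref{prop:bounded-curvature-measure} furnishes precisely the control needed, making the passage to the limit legitimate and closing the proof.
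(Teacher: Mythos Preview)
Your proposal is correct and follows essentially the same route as the paper: both construct the approximating family $\gamma_\varepsilon$ by rotating a $C^2$-extension of $\overset{\frown}{\pt{CA}}$ about $\pt{C}$ (using the transversality hypothesis at $\pt{C}$), apply Lemma~\ref{lem3} to $\triangle\pt{A}_\varepsilon\pt{BC}$, and pass to the limit via Proposition~\ref{prop:bounded-curvature-measure}. The only minor refinement in the paper is the observation that $\angle\pt{A}_\varepsilon$ is in fact \emph{constant} for $\varepsilon\in(0,1]$ (since, by Proposition~\ref{thm:class}, the $u$-curve through $\pt{A}_\varepsilon$ either separates the two edges for all small $\varepsilon$ or for none), rather than merely convergent as you write.
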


\begin{proof}
 We may assume that
 $\triangle\pt{ABC}$ lies in $\overline{M}_+$.
 We can take a short extension of 
 the $C^2$-regular arc $\overset{\frown}{\pt{AC}}$ beyond
 $\pt{A}$, 
 and  rotate it around $\pt{C}$
 with respect to the canonical metric $du^2+dv^2$
 on the $uv$-plane. 
 Then we get 
 a smooth $1$-parameter family of 
 $C^2$-regular arcs staring at $\pt{C}$. 
 Since $\overset{\frown}{\pt{BC}}$ and 
 $\overset{\frown}{\pt{AC}}$ are transversal at $\pt{C}$,
 restricting the image of this family on the triangle $\triangle\pt{ABC}$,
 we get a family of $C^2$-regular curves
 \[
   \gamma_\varepsilon:[0,1]\longrightarrow 
       \triangle\pt{ABC},\qquad (\varepsilon\in [0,1])
 \]
 such that (see Figure~\ref{fig:proofs1}, right)
 \begingroup
 \renewcommand{\theenumi}{(\roman{enumi})}
 \begin{enumerate}
  \item $\gamma_0$ parametrizes $\overset{\frown}{\pt{AC}}$
	such that $\gamma_0(1)=\pt{A}$ and $\gamma_0(0)=\pt{C}$,
  \item $\gamma_\varepsilon(0)=\pt{C}$ for all $\varepsilon\in [0,1]$,
  \item the correspondence $\varepsilon\mapsto \gamma_\varepsilon(1)$ 
	gives 
	a subarc on $\overset{\frown}{\pt{AB}}$.
	We set $\pt{A}_\varepsilon=\gamma_\varepsilon(1)$, 
	where $\pt{A}_0=\pt{A}$.
 \end{enumerate}
 \endgroup
 Since $\overset{\frown}{\pt{AC}}$ is admissible,
 its tangential vector at $\pt{A}$ does not point in
 the null-direction.
 Hence, 
 the tangential vector of  $\overset{\frown}{\pt{A}_\varepsilon\pt{C}}$ at 
 $\pt{A}_\varepsilon$ does not point in
 the null-direction for sufficiently small
 $\varepsilon>0$ and 
 $\triangle\pt{A}_\varepsilon\pt{BC}$ is an admissible triangle.
 Applying Lemma~\ref{lem3} for $\triangle \pt{A}_\varepsilon\pt{BC}$, 
 we have
 \[
   \angle\pt{A}_\varepsilon +\angle\pt{A}_\varepsilon\pt{CB}+
   \angle \pt{B}-\pi=
        \int_{\partial \triangle \pt{A}_\varepsilon\pt{BC}} 
     \tilde\kappa_g\, d\tau + 
     \int_{\triangle\pt{A}_\varepsilon\pt{BC}} K\, dA.
 \]
 By Proposition~\ref{prop:bounded-curvature-measure} and
 the Lebesgue convergence theorem, the limit
 \[
   \int_{\partial \triangle\pt{ABC}}\tilde\kappa_g\, d\tau
     =\lim_{\varepsilon\to +0}
        \int_{\partial \triangle\pt{A}_\varepsilon\pt{BC}}
          \tilde\kappa_g\, d\tau
 \]
 exists.
 On the other hand, since the property that
 the $u$-direction $\partial /\partial u$ 
 at $\pt{A}_\varepsilon$
 is inward or outward is common in $\varepsilon\in (0,1]$
 (cf.\ the proof of Theorem~\ref{thm:class}),
 $\angle \pt{A}_\varepsilon$ is common in $\varepsilon$.
 Thus we have 
 \[
    \angle\pt{A}=\lim_{\varepsilon\to +0}
     \angle\pt{A}_\varepsilon=
    \begin{cases}
      \pi \quad&  
         \mbox{if the $u$-curve separates $\overset{\frown}{\pt{AB}}$
               and $\overset{\frown}{\pt{AC}}$},
     \\
     0 \quad& \mbox{otherwise}.
    \end{cases}
 \]
 This proves the assertion.
\end{proof}

\begin{figure}
\begin{center}
\footnotesize
 \includegraphics{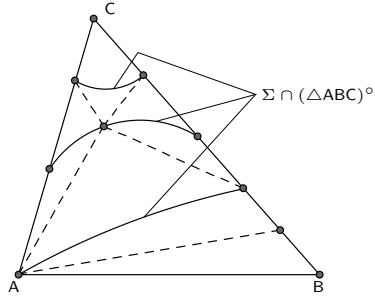}
\end{center}
\caption{%
  A proof of Lemma~\ref{lem5}.
}
\label{fig:proofs2}
\end{figure}

\begin{lemma}\label{lem5}
 Let $\pt{A}$ be a peak of an admissible triangle $\triangle\pt{ABC}$. 
 Suppose that there are at most one
 singular curve in 
 $\triangle\pt{ABC}$
 starting at $\pt{A}$ from the null direction.
 Then \eqref{eq:locGB} holds.
\end{lemma}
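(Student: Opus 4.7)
My plan is to subdivide the admissible triangle $\triangle\pt{ABC}$ at the peak $\pt{A}$ along the singular curves emanating from $\pt{A}$, together with auxiliary $C^2$-regular admissible arcs introduced as needed, and then apply Lemma~\ref{lem4} to each resulting sub-triangle.

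First I fix a $g$-coordinate system $(U;u,v)$ at $\pt{A}$ as in Proposition~\ref{thm:class}, so that the null direction is the $u$-axis. By the peak condition of Definition~\ref{def:peak}, only finitely many singular curves start at $\pt{A}$; together with the two boundary arcs $\overset{\frown}{\pt{AB}}$ and $\overset{\frown}{\pt{AC}}$, they partition a neighborhood of $\pt{A}$ in $\triangle\pt{ABC}$ into finitely many angular sectors, each lying in $\overline{M}_+$ or $\overline{M}_-$. For any sector bounded by two singular rays emanating from $\pt{A}$, I would insert an auxiliary admissible $C^2$-regular arc $\eta$ from $\pt{A}$ that terminates transversally on $\overset{\frown}{\pt{BC}}$, with initial velocity lying strictly in the interior of the sector and transversal to the $u$-axis. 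The hypothesis that at most one singular curve from $\pt{A}$ starts in the null direction ensures that such an $\eta$ exists: indeed, no sector bounded by two singular rays can collapse exclusively to the null direction, so a non-null initial direction within the sector is available. After this refinement, each resulting sub-triangle has $\pt{A}$ as a vertex with exactly one singular side emanating from $\pt{A}$, and its interior lies in $M_+$ or $M_-$, so it satisfies the hypothesis of Lemma~\ref{lem4}.

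I would then sum the identities supplied by Lemma~\ref{lem4} over all sub-triangles. By Corollary~\ref{cor:angle}, only the sub-triangle whose interior contains the $u$-axis at $\pt{A}$ contributes $\pi$ to the interior angle at $\pt{A}$ while the rest contribute $0$, so the sum equals $\angle\pt{A}$ as defined in \eqref{def:lim-angle}. At each new interior vertex on $\overset{\frown}{\pt{BC}}$, the two interior angles from the adjacent sub-triangles sum to $\pi$; combined with the $-\pi$ contributed by each sub-triangle's Lemma~\ref{lem4} identity, this collapses to the single $-\pi$ of the final formula together with the angles $\angle\pt{B}$ and $\angle\pt{C}$. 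Each auxiliary $\eta$-arc is traversed twice in opposite orientations, so its $\tilde\kappa_g$ contributions cancel. Each interior singular ray from $\pt{A}$ is traversed twice with the same $\kappa_s\,d\tau$ (since $\kappa_s$ is orientation-independent), producing a $2\int_\sigma \kappa_s\,d\tau$ term that matches the interior singular-curvature contribution of \eqref{eq:locGB0}. The remaining outer-boundary and area integrals combine into $\int_{\partial\triangle\pt{ABC}}\tilde\kappa_g\,d\tau + \int_{\triangle\pt{ABC}}K\,dA$, yielding \eqref{eq:locGB}.

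The main obstacle I anticipate is the careful choice of the auxiliary arcs $\eta$ so that each sub-triangle is a bona fide admissible triangle in the sense of Definition~\ref{def:admissible-triangle}: transversality at the new boundary vertex on $\overset{\frown}{\pt{BC}}$, strictly-less-than-$\pi$ interior angles in $g$ at the sub-triangle vertices, and $C^2$-regularity where required. These are all achievable by small generic perturbations of the initial direction and terminal point of $\eta$, exploiting the angular room afforded by the ``at most one null-directional singular curve'' hypothesis.
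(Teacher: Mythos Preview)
Your approach is essentially the same as the paper's: both subdivide $\triangle\pt{ABC}$ into admissible sub-triangles to which Lemmas~\ref{lem1}--\ref{lem4} apply, and note that the geometric curvature along each interior singular edge is counted twice (yielding the $2\int\kappa_s\,d\tau$ term, so the conclusion is really \eqref{eq:locGB0}, as the paper's own use of this lemma in Lemma~\ref{lem6} confirms). The paper's proof is a one-line sketch plus a figure; you have simply supplied the subdivision and bookkeeping details.
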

\begin{proof}
 By a suitable division,   
 the triangle decomposed into admissible triangles
 which satisfies the one of  the conditions as in
 Lemmas \ref{lem1}--\ref{lem4}.
 Then the formula is proved, since the geometric curvature on
 each edge consists of singular curve is duplicated.
 See Figure~\ref{fig:proofs2}.
\end{proof}

\begin{figure}
\begin{center}
\footnotesize
\begin{tabular}{c@{\hspace{10mm}}c}
        \includegraphics{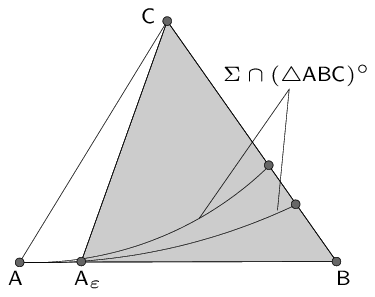} &
        \includegraphics{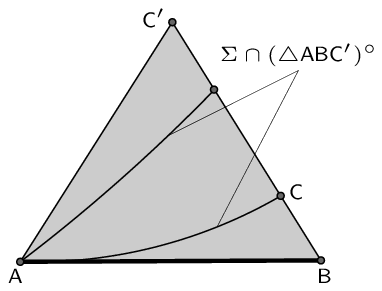} 
 \\
 Proofs of Lemmas~\ref{lem6} &
 Proof of Theorem~\ref{thm:main}
\end{tabular}
\end{center}
\caption{%
 Proofs of Lemmas~\ref{lem6} and Theorem~\ref{thm:main}
}
\label{fig:proofs3}
\end{figure}

\begin{lemma}\label{lem6}
 Let $\pt{A}$ be a peak of an admissible 
 triangle $\triangle\pt{ABC}$
such that
$\overset{\frown}{\pt{AB}}$
and $\overset{\frown}{\pt{BC}}$ are transversal
at $\pt{B}$. 
 Suppose that either $\overset{\frown}{\pt{AB}}$ or
 $\overset{\frown}{\pt{AC}}$ is not a singular curve.
 Then \eqref{eq:locGB} holds.
\end{lemma}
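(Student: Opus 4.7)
The plan is to reduce Lemma~\ref{lem6} to Lemma~\ref{lem5} by subdividing $\triangle\pt{ABC}$ along the singular curves emanating from the peak $\pt{A}$, using the hypothesis that one of the two edges meeting at $\pt{A}$ is non-singular. First I fix a $g$-coordinate system $(U;u,v)$ at $\pt{A}$ and, without loss of generality, assume $\overset{\frown}{\pt{AC}}$ is not a singular curve (the symmetric case proceeds the same way, using the given transversality at $\pt{B}$ to match the hypotheses of the earlier lemmas). By admissibility, the tangent of $\overset{\frown}{\pt{AC}}$ at $\pt{A}$ is then transversal to the null direction $\partial/\partial u$; this \emph{non-null} reference edge is precisely what gives enough angular room at $\pt{A}$ to carry out the subdivision.

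Next I enumerate the $A_2$-singular curves $\ell_1,\ldots,\ell_k$ emanating from $\pt{A}$ that lie in the interior of $\triangle\pt{ABC}$; these are finite in number by Definition~\ref{def:peak}\eqref{item:peak-3}. Since $\triangle\pt{ABC}$ contains no peak other than $\pt{A}$, each $\ell_i$ must terminate on $\overset{\frown}{\pt{BC}}$ at some point $\pt{P}_i$ (or, if $\overset{\frown}{\pt{AB}}$ is itself a singular curve, possibly merge into it). The arcs $\ell_1,\ldots,\ell_k$ together with the corresponding subarcs of $\overset{\frown}{\pt{BC}}$ decompose $\triangle\pt{ABC}$ into admissible sub-triangles $T_0,\ldots,T_k$; each $T_j$ has $\pt{A}$ as a peak vertex and contains no further singular curves from $\pt{A}$ in its interior.

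I then apply Lemma~\ref{lem5} to each sub-triangle $T_j$ and sum the resulting local Gauss--Bonnet identities. In the summation: at each interior subdivision point $\pt{P}_i$, the two adjacent interior angles sum to $\pi$, which cancels against the extra $-\pi$ contributed by the additional sub-triangle; at $\pt{A}$, Proposition~\ref{prop:g-angle} and Corollary~\ref{cor:angle} let me telescope the sub-angles back to the original $\angle\pt{A}$; and the boundary integrals of $\tilde\kappa_g$ along each interior edge $\ell_i$---taken from the two adjacent sub-triangles with opposite orientations---combine to reproduce the correct $\Sigma$-contribution on the original boundary (the non-$\Sigma$ portions cancelling out). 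This leaves exactly the identity \eqref{eq:locGB} for $\triangle\pt{ABC}$.

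The main obstacle will be verifying the hypothesis of Lemma~\ref{lem5} for every $T_j$, namely that at most one of its two boundary arcs at $\pt{A}$ lies in the null direction. Two consecutive singular curves $\ell_i,\ell_{i+1}$ that are both tangent to the $u$-axis at $\pt{A}$ would yield a cuspidal sub-sector to which Lemma~\ref{lem5} does not directly apply. I plan to handle this by an auxiliary limit argument: insert a $C^2$-regular cutting chord separating a small cap of the cuspidal region near $\pt{A}$ from the rest, apply Lemma~\ref{lem5} (or Lemma~\ref{lem3}) to the residual piece, and show that the cap's contribution vanishes in the pinching limit. This vanishing follows because, by Corollary~\ref{cor:angle}, the interior angle of the cuspidal sub-sector at $\pt{A}$ is zero, and by Proposition~\ref{prop:bounded-curvature-measure} the singular curvature measure on $\ell_i,\ell_{i+1}$ is bounded---so its integral over a shrinking subarc tends to zero. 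This last step is where I expect most of the technical work to lie.
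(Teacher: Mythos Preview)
Your route differs from the paper's. The paper does not subdivide along the interior singular curves; instead it runs a single limit argument, exactly parallel to the proof of Lemma~\ref{lem4} but invoking Lemma~\ref{lem5} in place of Lemma~\ref{lem3}. Assuming without loss of generality that $\overset{\frown}{\pt{AC}}$ is the non-singular edge, one extends it slightly past $\pt{A}$ and rotates the extension around $\pt{C}$ (in the flat metric $du^2+dv^2$), obtaining a family $\gamma_\varepsilon$ whose far endpoint $\pt{A}_\varepsilon=\gamma_\varepsilon(1)$ slides along $\overset{\frown}{\pt{AB}}$ away from the peak. For every $\varepsilon>0$ the vertex $\pt{A}_\varepsilon$ is an $A_2$-point (or a regular point), so $\triangle\pt{A}_\varepsilon\pt{BC}$ carries no peak vertex and Lemma~\ref{lem5} applies to it directly. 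One then lets $\varepsilon\to 0$, using Proposition~\ref{prop:bounded-curvature-measure} for convergence of the curvature integrals and Proposition~\ref{thm:class} for the constancy of $\angle\pt{A}_\varepsilon$.

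Your subdivision, by contrast, manufactures intermediate pieces $T_j$ in which \emph{both} edges at $\pt{A}$ are singular curves. These are precisely the configurations that Lemmas~\ref{lem1}--\ref{lem5} were built to avoid: Lemma~\ref{lem4} handles only one singular edge at the peak, and the reduction in Lemma~\ref{lem5}'s proof needs a non-singular admissible cut issuing from $\pt{A}$, which is unavailable when both bounding arcs share the null tangent. You correctly isolate this as the obstacle, and your proposed cure---cap off near $\pt{A}$ by a transversal chord and take a pinching limit---can be made to work, but it is essentially the paper's manoeuvre applied separately to each cuspidal sector rather than once to the whole triangle. The single sweep from $\pt{C}$ exploits the hypothesis ``one edge at $\pt{A}$ is non-singular'' in the most direct way: that edge supplies the regular pivoting arc, and the peak is bypassed in one stroke instead of being confronted $k$ times.
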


\begin{proof}
 The proof is almost parallel to
 that of Lemma \ref{lem4}
 (instead of Lemma \ref{lem3}, we apply Lemma \ref{lem5}):
 Without loss of generality, we may assume that
 $\overset{\frown}{\pt{AB}}$ is a singular curve.
 Then $\overset{\frown}{\pt{AC}}$ is a $C^2$-regular arc.
 We can take a short extension of 
 $\overset{\frown}{\pt{AC}}$ over $\pt{A}$, and 
 rotate it around $\pt{C}$
 with respect to the metric $du^2+dv^2$.
 Then we get 
 a smooth $1$-parameter family of arcs starting at $\pt{C}$. 
 Since $\overset{\frown}{\pt{BC}}$ and 
 $\overset{\frown}{\pt{AC}}$ are transversal at $\pt{C}$ by
the assumption of the lemma,
 restricting the image of this family to the triangle
 $\triangle\pt{ABC}$,
 we get a family of $C^2$-regular curves
 \[
   \gamma_\varepsilon:[0,1]\longrightarrow 
       \triangle\pt{ABC},\qquad (\varepsilon\in [0,1])
 \]
 such that
 \begingroup
 \renewcommand{\theenumi}{(\roman{enumi})}
 \begin{enumerate}
  \item $\gamma_0$ parametrizes $\overset{\frown}{\pt{AC}}$
	such that $\gamma_0(1)=\pt{A}$ and $\gamma_0(0)=\pt{C}$,
  \item $\gamma_\varepsilon(0)=\pt{C}$ for all $\varepsilon\in [0,1]$,
  \item the correspondence $\varepsilon\mapsto \gamma_\varepsilon(1)$ 
	gives 
	a subarc on $\overset{\frown}{\pt{AB}}$.
	We set $\pt{A}_\varepsilon=\gamma_\varepsilon(1)$, 
	where $\pt{A}_0=\pt{A}$.
 \end{enumerate}
 \endgroup
 Since $\overset{\frown}{\pt{AC}}$ is admissible,
 its tangential vector at $\pt{A}$ does not point in the null-direction.
 Hence, 
 the tangential vector of  $\overset{\frown}{\pt{A}_\varepsilon\pt{C}}$ at 
 $\pt{A}_\varepsilon$ does not point in the null-direction for 
 sufficiently small $\varepsilon>0$ and 
 $\triangle\pt{A}_\varepsilon\pt{BC}$ is an admissible triangle.
 Applying Lemma~\ref{lem5} for $\triangle \pt{A}_\varepsilon\pt{BC}$, 
 we have
 \[
   \angle\pt{A}_\varepsilon +\angle\pt{A}_\varepsilon\pt{CB}+
   \angle \pt{B}-\pi=
        \int_{\partial \triangle \pt{A}_\varepsilon\pt{BC}} 
    \tilde\kappa_g\, d\tau + 
     \int_{\triangle\pt{A}_\varepsilon\pt{BC}} K\, dA
    +2\int_{\Sigma\cap(\triangle \pt{A}_\varepsilon\pt{BC})^{\circ}}
             \!\!\kappa_s\,d\tau.
 \]
 By Proposition~\ref{prop:bounded-curvature-measure} and
 the Lebesgue convergence theorem, the limits
 \begin{align*}
   \int_{\partial \triangle\pt{ABC}}\tilde\kappa_g\, d\tau
     &=\lim_{\varepsilon\to +0}
        \int_{\partial \triangle\pt{A}_\varepsilon\pt{BC}}
          \tilde\kappa_g\, d\tau,\qquad\text{and}\\
         \int_{\Sigma\cap(\triangle\pt{ABC})^{\circ}}
          \kappa_s\,d\tau
     &
      =\lim_{\varepsilon\to +0}
      \int_{\Sigma\cap(\triangle\pt{A}_{\varepsilon}\pt{BC})^{\circ}}
          \kappa_s\,d\tau
 \end{align*}
 exist.
 On the other hand, since the property that
 the $u$-direction $\partial /\partial u$ 
 at $\pt{A}_\varepsilon$
 is inward or outward is common in $\varepsilon\in (0,1]$
 (cf.\ the proof of Theorem~\ref{thm:class}),
 $\angle \pt{A}_\varepsilon$ is common in $\varepsilon$.
 Thus we have 
 \[
    \angle\pt{A}=\lim_{\varepsilon\to +0}
     \angle\pt{A}_\varepsilon=
    \begin{cases}
      \pi \quad&  \mbox{if the $u$-curve separates $\overset{\frown}{\pt{AB}}$
                           and $\overset{\frown}{\pt{AC}}$},
     \\
     0 \quad& \mbox{otherwise}.
    \end{cases}
 \]
 This proves the assertion.
\end{proof}

\begin{proof}[Proof of Theorem \ref{thm:main}]
 As seen in the proof of Lemma~\ref{lem5},
 the given triangle can be divided into small 
 triangles.
 So it is sufficient to 
 consider the case that $\triangle\pt{ABC}$ has the 
 following four properties:
 \begin{enumerate}
  \item $\pt{A}$ is a peak,
  \item $\overset{\frown}{\pt{AB}}$ and $\overset{\frown}{\pt{AC}}$
    are both singular curves which have 
    the same null-velocity vector at $\pt{A}$,
  \item $\overset{\frown}{\pt{BC}}$ is not a singular curve,
  \item there are no singular points on the
    inside of the triangle $\triangle\pt{ABC}$.
 \end{enumerate}
 Take a new smooth arc
 $\overset{\frown}{\pt{BC}}{}'$ such that
 $\overset{\frown}{\pt{BC}}$ is a subarc.
 Then we consider a new admissible triangle
 $\triangle\pt{ABC}'$, which contains
 $\triangle\pt{ABC}$ as a subset, see Figure~\ref{fig:proofs3}, right.

 In this situation, the two  triangles 
 $\triangle\pt{ABC}'$  and $\triangle\pt{ACC}'$ satisfy 
 the assumption of Lemma~\ref{lem6}.
 So we have
 \begin{equation}\label{t1}
  \angle\pt{C}'\pt{AB} +\angle\pt{B}+\angle\pt{C}'-\pi=
   \int_{\partial \triangle \pt{ABC}'} \!\!\!
    \hat\kappa_g d\tau + 
     \int_{\triangle\pt{ABC}'} \!\!\!K\, dA
     +
     2 
     \int_{\Sigma\cap (\triangle\pt{ABC}')^\circ}\!\!\! \kappa_s\, d\tau,
 \end{equation}
 and 
 \begin{equation}\label{t2}
  \angle\pt{C}'\pt{AC} +\angle\pt{ACC}'+\angle\pt{C}'-\pi=
  \int_{\partial \triangle\pt{ACC}'} \!\!\!
  \hat\kappa_g\, d\tau + 
  \int_{\triangle\pt{ACC}'}\!\!\! K\, dA
  +2
  \int_{\Sigma\cap (\triangle\pt{ACC}')^\circ} \!\!\!\kappa_s\, d\tau.
  \end{equation}
 Subtracting \eqref{t2} from \eqref{t1}, we get
 \[
   \angle\pt{A} +\angle\pt{B}+\angle\pt{C}-\pi=
    \displaystyle\int_{\partial \triangle\pt{ABC}} 
    \hat\kappa_g d\tau + 
    \int_{\triangle\pt{ABC}} K\, dA +
    2\int_{\overset{\frown}{\pt{BC}}}\kappa_s\,d\tau,
 \]
 since 
 \[
    \int_{\Sigma\cap (\triangle\pt{ABC}')^\circ}\!\!\! \kappa_s\, d\tau
     -
     \int_{\Sigma\cap (\triangle\pt{ACC}')^\circ}\!\!\! \kappa_s\, d\tau
     =
    \int_{\overset{\frown}{\pt{BC}}} \kappa_s\, d\tau
 \]
 and $\angle\pt{ACC}'+\angle\pt{C}=\pi$.

 This proves the formula \eqref{eq:locGB0}
 for any admissible triangle.
\end{proof}

\section{The proof of Theorem~\ref{thm:B}.}
\label{sec:globalGB}

\begin{proof}[Proof of Theorem~\ref{thm:B}]
 Although $\partial M_+$ and $\partial M_-$ are the same set, 
 their orientations are opposite.
 However, the singular curvature $\kappa_s$ 
 does not depend on the orientation of the singular curve.
 So we have
 \begin{equation}\label{eq:integral-of-singular-curvature}
    \int_{\partial M_+}\!\! \kappa_s\, d\tau 
      +\int_{\partial M_-}\!\! \kappa_s\, d\tau=
    2\int_{\Sigma} \kappa_s\, d\tau.
 \end{equation}
 The singular set on a sufficiently small neighborhood of 
 a peak $p$
 consists of finitely many regular $C^1$-curves starting from $p$.
 (The number $2m(p)$ of these singular curves starting from $p$
 is always even.)
 Since the  integrations of geometric curvatures 
 not on singular curves are cancelled by opposite integrations,
 we have from  Theorem~\ref{thm:main} that
 \begin{align*}
   2\pi\chi(M_+)
   &=\int_{M_+} K\,dA+\int_{\partial M_+}\!\!\kappa_s\, d\tau +
         \sum_{p:\text{peak}}\bigl(m(p)\pi-\alpha_+(p)\bigr),\\
   2\pi\chi(M_-)
   &=\int_{M_-} K\,dA+\int_{\partial M_-}\!\!\kappa_s\, d\tau +
         \sum_{p:\text{peak}}\bigl(m(p)\pi-\alpha_-(p)\bigr).
 \end{align*}

 Hence by \eqref{eq:integral-of-singular-curvature}, \eqref{eq:sum}
 and Definition~\ref{def:positive}, we have
 \begin{align*}
  2\pi\bigl(\chi(M_+)+\chi(M_-)\bigr)
  &=  \int_{M^2} K\,dA +
                   2\int_{\Sigma}\kappa_s\,d\tau
  +2\pi\sum_{p:\text{peak}}(m(p)-1),\\
  2\pi\bigl(\chi(M_+)-\chi(M_-)\bigr) 
                   &= \int_{M^2} K\,d\hat A
                         -
                     \sum_{p:\text{peak}}
                      \bigl(\alpha_+(p)-\alpha_-(p)\bigr).
 \end{align*}
 Since
 $M^2$ is the disjoint union of $M_+$, $M_-$ and $\Sigma$, the 
 following 
 formula for Euler numbers holds:
 \[
 \chi(M^2)=\chi(M_+)+\chi(M_-)+\chi(\Sigma).
 \]
 Since we assumed that $\Sigma$ admits at most peaks,
 $\Sigma$ is a finite topological graph.
 Hence we have $\chi(\Sigma)=\sum_{p:\text{peak}}(1-m(p))$.
 Thus, we have
 \begin{align*}
  2\pi\bigl(\chi(M_+)+\chi(M_-)+\chi(\Sigma)\bigr)
  &=  \int_{M^2} K\,dA +
                   2\int_{\Sigma}\kappa_s\,d\tau,\\
  \chi(M_+)-\chi(M_-) 
                   &=\frac{1}{2\pi} \int_{M^2}K d\hat A
                         -
  (\# P_+-\# P_-),
 \end{align*}
 where we used \eqref{eq:diff}. 
 Thus we have \eqref{eq:A}.
 Finally, by \eqref{eq:d-omega} and \eqref{eq:Euler}, we have 
 \eqref{eq:B}.
\end{proof}


\end{document}